\newcolumntype{L}{>{\centering\arraybackslash}p{3cm}}
\newtheorem{theorem}{Theorem}[section]
\newtheorem*{theorem*}{Theorem}
\newtheorem{lemma}[theorem]{Lemma}
\newtheorem{proposition}[theorem]{Proposition}
\theoremstyle{definition}
\newtheorem{definition}[theorem]{Definition}
\theoremstyle{remark}
\newtheorem{example}[theorem]{Example}
\newcommand{\PP}{\mathbb{P}}
\newcommand{\CC}{\mathbb{C}}
\newcommand{\ZZ}{\mathbb{Z}}
\newcommand{\QQ}{\mathbb{Q}}
\newcommand{\NN}{\mathbb{N}}
\renewcommand{\AA}{\mathbb{A}}
\newcommand{\cE}{\mathcal{E}}
\newcommand{\cK}{\mathcal{K}}
\newcommand{\cB}{\mathcal{B}}
\newcommand{\cX}{\mathcal{X}}
\newcommand{\cZ}{\mathcal{Z}}
\newcommand{\set}[1]{\left\{#1\right\}}  
\DeclareMathOperator{\Hom}{Hom}  
\DeclareMathOperator{\Aut}{Aut}  
\DeclareMathOperator{\SL}{SL}   
\DeclareMathOperator{\Pic}{Pic}  
\DeclareMathOperator{\rk}{rank}  
\DeclareMathOperator{\sign}{sign}
\DeclareMathOperator{\GL}{GL}
\DeclareMathOperator{\spec}{sp}
\def\imod#1{\allowbreak\mkern10mu({\operator@font mod}\,\,#1)}
\definecolor{pistachio}{rgb}{0.58, 0.77, 0.45}
\definecolor{red(munsell)}{rgb}{0.95, 0.0, 0.24}
\definecolor{eggshell}{rgb}{0.94, 0.92, 0.84}
\title{Mirror symmetry for K3 surfaces}
\author{C.J. Bott}
\address{Department of Mathematics, 219E BLOC, Texas A\&M University, College Station, TX, 77840}
\email{cbott2@math.tamu.edu}
\author{Paola Comparin}
\address{
Departamento de Matem\'atica y Estad\'istica, 
Universidad de La Frontera, 
Av. Francisco Salazar 1145, Temuco, Chile}
\email{paola.comparin@ufrontera.cl}
\author{Nathan Priddis}
\address{Department of Mathematics, 
275 TMCB, Brigham Young University, 
Provo, UT 84602, USA.}
\email{priddis@mathematics.byu.edu}
\begin{document}
\keywords{K3 surfaces; mirror symmetry; mirror lattices; Berglund-H\"ubsch-Krawitz construction}
\subjclass[2010]{Primary 14J28; Secondary 14J32; 14J17; 11E12; 14J33}

\begin{abstract}
For certain K3 surfaces, there are two constructions of mirror symmetry that are very different. The first, known as BHK mirror symmetry, comes from the Landau--Ginzburg model for the K3 surface; the other, known as LPK3 mirror symmetry, is based on a lattice polarization of the K3 surface in the sense of Dolgachev's definition. There is a large class of K3 surfaces for which both versions of mirror symmetry apply. In this class we consider the K3 surfaces admitting a certain purely nonsymplectic automorphism of order 4, 8, or 12, and we complete the proof that these two formulations of mirror symmetry agree for this class of K3 surfaces.

\end{abstract}
\maketitle

\section*{Introduction}

The phenomenon of mirror symmetry, first arising in physics in the context of string theory, started interesting mathematicians in the 90's, when a conjecture by a group of physicists \cite{candelas} provided a method for
counting rational curves on the quintic threefold that was much more effective then all formerly known methods. Since that time, mathematicians have expended much energy trying to formulate and understand this phenomenon mathematically.  
There are several mathematical constructions of mirror symmetry in various contexts.

The idea of mirror symmetry is a correspondence between families of {\em Calabi-Yau varieties}, which essentially trades the information of complex structures on the first family with K\"ahler structures on the second. 
For example, a first prediction of mirror symmetry is 
a relationship between the Hodge diamonds of these varieties.  
For general members $X$ and $X'$ of two mirror families of Calabi-Yau varieties $\mathcal F$ and $\mathcal F'$ we expect the following relationship between the Hodge numbers.  
\[
h^{p,q}(X)=h^{n-p,q}(X')
\]
where $n$ is the dimension of $X,X'$.

Various authors have found mathematical constructions for mirror symmetry in various contexts, including Batyrev-Borisov \cite{batyrev}, Givental \cite{givental}, Hori-Vafa \cite{HV}, and others. 
In this article we are interested in two particular formulations of mirror symmetry, namely BHK mirror symmetry and mirror symmetry for lattice polarized K3 surfaces, which we now describe. 

BHK mirrror symmetry was proposed by Berglund--H\"ubsch \cite{berghub}, Berglund--Henningson \cite{berghenn}, and later Krawitz \cite{krawitz}. 
This construction applies to Calabi-Yau varieties in any dimension, given as hypersurfaces in weighted projective spaces $\mathbb P(w_0,\ldots,w_{m})$. In order to construct the Calabi-Yau variety $X_{W,G}$, one needs to choose a quasismooth invertible polynomial in $\mathbb P(w_0,\ldots,w_{m})$ of degree $d=\sum_{i=0}^{m}w_i$ and a subgroup $G$ of diagonal 
automorphisms of the vanishing locus of $W$ satisfying certain conditions (more details can be found in Section~\ref{sec:BHK}).
This version of mirror symmetry provides a rule for constructing a dual polynomial $W^T$ and a dual group $G^T$ from which one can construct the \emph{BHK mirror} $X_{W^T,G^T}$. This rule has been proven to be mirror in the sense of Hodge numbers by Chiodo and Ruan in \cite{BHCR}. Others have use BHK mirror symmetry to prove deeper relationships between the mirror manifolds, 
e.g. \cite{CIR}, \cite{ChR},  \cite{clader}, \cite{guere}, \cite{PSh}. 

When passing to Calabi-Yau varieties of dimension 2, i.e. \emph{K3 surfaces}, since the Hodge diamond of a K3 surface is fixed, a specific definition of mirror symmetry specific to K3 surfaces has been introduced by Dolgachev, Voisin and others, which we will call LPK3 mirror symmetry. This version of mirror symmetry 
involves the behaviour of a lattice primitively embedded in the Picard group $\Pic(X)$. In other words, one begins with a lattice polarization of a K3 surface, and defines the LPK3 mirror family as the family of K3 surfaces polarized by a certain \emph{mirror lattice} (see \cite{dolgachev} and Section \ref{ss:LPK3mirror} for details). This is the second formulation of mirror symmetry that we consider. 

A natural question that arises in this context is whether these two definitions of mirror symmetry agree, i.e. if the mirror K3 surfaces obtained via the BHK construction belong to LPK3 mirror families or not. This question has been partially answered; in the current paper we will complete the proof, by answering the question in the affirmative for a class of K3 surfaces admitting a non--symplectic automorphism of order $n=4,8,12$.

More specifically, we begin with a K3 surface $X_{W,G}$ obtained from a pair $(W,G)$ of a polynomial of the form $W=x_0^n+f(x_1,x_2,x_3)$ and a group satisfying certain conditions (see Section~\ref{sec:BHK} for the construction). This K3 surface naturally admits a non--symplectic automorphism $\sigma_n(x_0,x_1,x_2,x_3)=(\zeta_n x_0,x_1,x_2,x_3)$, where $\zeta_n$ is a primitve $n$-th root of unity. The invariant lattice 
\[
S(\sigma_n):=\{x\in H^2(X_{W,G}, \ZZ): \sigma_n^*x=x\}
\]
polarizes the K3 surface $X_{W,G}$. The same happens in the dual context for the BHK mirror $X_{W^T,G^T}$ with $\sigma_n^T$ and $S(\sigma^T_n)$ defined in the same way. In order to show the equivalence of BHK and LPK3 mirror contructions, we prove the following theorem (see Section~\ref{s:MainResult}).

\begin{theorem*}
Given an invertible polynomial of the  form $W=x_0^n+f(x_1,x_2,x_3)$ quasihomogeneous with respect to one of Reid and Yonemura’s 95 weight systems, and $G$ a group of symmetries of $W$ satisfying $J_W\leq G\leq \SL_W$. 
Then the K3 surface $X_{W,G}$, polarized by $S(\sigma_n)$, and its BHK mirror $X_{W^T,G^T}$ polarized by $S(\sigma_n^T)$, form an LPK3 mirror pair.
\end{theorem*}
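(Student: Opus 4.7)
The plan is to verify the lattice-theoretic condition that characterizes LPK3 mirror pairs. By Dolgachev's criterion, $(X_{W,G},S(\sigma_n))$ and $(X_{W^T,G^T},S(\sigma_n^T))$ form an LPK3 mirror pair precisely when there is an isometry
\[
S(\sigma_n)^{\perp}_{\klat}\;\cong\;U\oplus S(\sigma_n^T),
\]
where $\klat$ is the K3 lattice and $U$ is the hyperbolic plane. The entire argument therefore reduces to exhibiting this isometry.

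The first step is to enumerate the cases. The condition that $W=x_0^n+f(x_1,x_2,x_3)$ be an invertible quasihomogeneous polynomial adapted to one of the 95 Reid--Yonemura weight systems, with $n\in\{4,8,12\}$, leaves only finitely many choices of $f$; for each such $W$ the admissible subgroups with $J_W\le G\le\SL_W$ are finite in number as well. The theorem thus becomes a finite case-by-case verification, which can be organized into tables indexed by the underlying weight system and by $n$. Reducing the problem this way also allows the prime-order results already in the literature to be invoked directly on the subautomorphisms $\sigma_n^k$ for $k\mid n$.

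For each pair $(W,G)$, I would compute $S(\sigma_n)$ explicitly. Because $\sigma_n$ is non-symplectic, its fixed locus on a crepant resolution of $X_{W,G}$ is a disjoint union of curves and isolated points determined by the combinatorics of the weighted projective ambient space together with the $G$-quotient, and Nikulin-type formulas recover the rank, signature, and discriminant form of $S(\sigma_n)$. Repeating the computation for the BHK mirror produces $S(\sigma_n^T)$. The two lattices $S(\sigma_n)^{\perp}_{\klat}$ and $U\oplus S(\sigma_n^T)$ would then be compared as even indefinite lattices by matching their invariants and appealing to Nikulin's uniqueness results.

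The principal obstacle, as compared with the prime-order cases already settled, is that for composite $n$ the cyclic group $\br{\sigma_n}$ has nontrivial proper subgroups whose generators are themselves non-symplectic automorphisms carrying their own invariant lattices; $S(\sigma_n)$ is the deepest stratum in that tower and is not pinned down by a single trace computation. One has to track the interrelations among all of the eigensublattices in order to identify the orthogonal complement. I expect the mirror condition itself to fall out of the combinatorial BHK duality: transposing the exponent matrix of $W$ and dualizing $G$ should permute the fixed-locus data of $\sigma_n$ in exactly the way required to exchange $S(\sigma_n)$ with the mirror of $S(\sigma_n^T)$. The main technical work will therefore be the case-by-case matching of discriminant forms across this combinatorial duality.
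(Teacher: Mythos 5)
Your overall framing is right---the theorem does reduce to a finite, case-by-case verification that $S(\sigma_n)^{\perp}_{\klat}\cong U\oplus S(\sigma_n^T)$, carried out by matching ranks and discriminant forms and invoking Nikulin's uniqueness results. But the central step of your plan, namely that ``Nikulin-type formulas recover the rank, signature, and discriminant form of $S(\sigma_n)$'' from the topology of the fixed locus, is exactly what fails for composite $n$, and in particular for $n=4,8,12$. For $n$ prime the invariant lattice is $p$-elementary and the fixed-locus data pin it down; for composite $n$ they do not, and $S(\sigma_n)$ is only determined as \emph{some} overlattice of a lattice you can compute. Your remark that one must ``track the interrelations among all of the eigensublattices'' names the difficulty but supplies no mechanism for resolving it, so as written the proposal has a genuine gap precisely where the new work is needed.

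The paper closes this gap by a different and much more concrete route: it exhibits explicit divisor classes---orbit sums of exceptional curves of the resolution $X_{W,G}\to Z_{W,G}$ and of smooth components of the coordinate curves---generating a full-rank sublattice $L_{\cB}\subseteq S(\sigma_n)$ (the rank being known from Lemma~\ref{l:rank}), and then proves $L_{\cB}=S(\sigma_n)$ by showing the inclusion is a \emph{primitive} embedding. Primitivity is established by factoring $L_{\cB}\hookrightarrow\Pic(X)$ through lattices that are already known to embed primitively (Lemma~\ref{lem:embed}), using Belcastro's computation of the generic Picard lattice for each weight system together with the specialization map. To make the case analysis tractable, the paper first groups the pairs $(W,G)$ into equivalence classes via $\sigma_n$-equivariant isomorphisms (Kelly--Shoemaker) and via deformations preserving the configuration of curves, so that only one representative per class needs checking; and in three exceptional classes where no convenient representative exists, it enlarges the generating set by explicit lines on the Fermat quartic in order to control the intermediate lattice $S(\sigma_n^2)$. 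None of this machinery appears in your proposal, and without it (or a substitute) the identification of $S(\sigma_n)$---and hence the comparison of discriminant forms---cannot be completed.
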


This result has been proved for $n=2$ by Artebani, Boissi\`ere and Sarti in \cite{ABS} and for $n$ prime by Comparin, Lyons, Priddis and Suggs in \cite{CLPS}, and for all other $n$, except for $n=4,8,12$ in \cite{CP}. In all cases, the proof of the theorem is done computing the invariant lattices $S(\sigma_n)$ and $S(\sigma_n^T)$ for the K3 surfaces $X_{W,G}$ and $X_{W^T, G^T}$ and then comparing them, in order to show they are mirror lattices.
In the first two papers the computations rely heavily on the relations between topological invariants of the fixed locus of the automorphism $\sigma_n$ and the lattice invariants of $S(\sigma_n)$, when $n$ is prime. 

Such relations are no longer avalaible when $n$ is not prime, so that in \cite{CP} the authors introduce new methods for computing $S(\sigma_n)$ in order to prove the theorem for $n$ composite and different from $4,8,12$. 
However, these methods we not sufficient for $n=4,8,12$. 

In the current paper we introduce new methods, in order to study the missing cases and complete the proof of the theorem for any $n$. The new methods include using certain isomorphisms and deformations in order to group K3 surfaces into equivalence classes with the property that every K3 surface in a given equivalence class has the same invariant lattice $S(\sigma_n)$. 
Then one can apply a modification of former methods to obtain the required information about $S(\sigma_n)$ needed to prove the theorem. In some cases, a deeper analysis is needed, so we exploit knowledge of lines on certain Fermat surfaces in weighted projective space to complete the analysis.

The paper is organized as follows: in Section \ref{sec:back} we recall some preliminary results on lattices and K3 surfaces and we present in Sections \ref{ss:LPK3mirror} and \ref{sec:BHK} the two definitions of LPK3 and BHK mirror symmetry.
In Section \ref{s:MainResult} we state the main theorem and show how isomorphisms and deformations can be used in the proof of it.
Section \ref{s:proofMain} is devoted to the calculations which prove the theorem, while in Section \ref{sec:exc} we deal with the exceptional cases that need special methods.
Appendix \ref{app:tables} contains tables with the calculations proving the theorem.

\section*{Acknowledgments}
We would like to thank Michela Artebani, Alessandra Sarti and Matthias Sch\"utt
for many useful discussions and helpful insights. 
The second author has been partially supported by Proyecto Fondecyt Postdoctorado N. 3150015 and Proyecto Anillo ACT 1415 PIA Conicyt.

\section{Background}\label{sec:back}

In this section, in order to set notation, we will give the necessary background material. 
As this can be found in several other places, e.g. \cite{ABS,CLPS, nikulin}, we will be terse.  

\subsection{K3 surfaces and lattices}

A \emph{K3 surface} $X$ is a compact complex surface  having trivial canonical bundle  and $h^{1,0}(X)=0$.
Let $\omega_X$ be the  basis of the 1-dimensional space $H^{2,0}(X)$.
Given an automorphism $\sigma\in\Aut(X)$, we call $\sigma$ \emph{symplectic} if $\sigma^*\omega_X=\omega_X$ and \emph{non-symplectic} otherwise. If $\sigma$ has order $n$ and  $\sigma^*\omega_X=\zeta_n\omega_X$ with $\zeta_n$ a primitive $n$-th root of unity, $\sigma$ is called \emph{purely non-symplectic of order $n$}.

An \emph{(integral) lattice} is defined to be a pair $(L,B)$, where $L$ is a free abelian group of finite rank and $B:L\times L \to \ZZ$ is a non-degenerate symmetric bilinear form. A lattice is called \emph{even} if $B(x,x) \in 2\ZZ$ for all $x \in L$. 
We will denote the \emph{signature} of the lattice $(L,B)$ by $(l_+,l_-)$. A lattice is \emph{hyperbolic} if $l_+=1$. 
From now on, we denote a lattice $(L,B)$ by $L$ for convenience, making $B$ explicit when required.

A \emph{sublattice} $(L',B') \subseteq (L,B)$ is a free abelian subgroup $L' \subseteq L$, where $B$ restricted to $L'$ is $B'$. A sublattice $L' \subseteq L$ is \emph{primitively embedded} if $L/L'$ is free. Furthermore $L$ is called an \emph{overlattice} of $L'$ if $L/L'$ is a finite abelian group. 

The following are some lattices of particular interest: 
\begin{enumerate}
\item The rank 2 hyberbolic lattice whose bilinear form is given by the matrix 
$\left(\begin{matrix} 
0&1 \\
1&0 \end{matrix}\right)$ is called $U$.
\item  The lattices $A_l$, $D_m$, and $E_n$ ($l\geq 1$,$m\geq 4$, $6\leq n\leq 8$) are negative definite lattices whose bilinear form is given by the adjacency matrices for the Dynkin diagrams of the classic $ADE$-singularities.
\item The $T_{p,q,r}$ lattice has rank $p+q+r-2$ and is defined by the adjacency matrix for a graph in the form of a $T$ with $p,q,r$ the respective lengths of the legs (see Figure~\ref{T_{4,4,4}} for $T_{4,4,4}$).
\item For $n\in\ZZ$, the rank $1$ lattice $\langle n\rangle$ is defined by multiplication by $n$, i.e. $B(x,y)=xny$ for $x,y\in L\cong \ZZ$.
\item If $L$ is a lattice and $n\in\ZZ$, we will write $L(n)$ to express the lattice whose values are $n$ times those for $L$.
\end{enumerate}

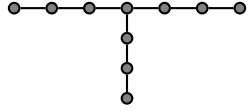
\begin{figure}[h]
\centering
\begin{tikzpicture}[xscale=.5,yscale=.4, thick, every node/.style={circle, draw, fill=black!50, inner sep=0pt, minimum width=4pt}]
  \node (n1) at (-3,0) {};
  \node (n2) at (-2,0) {};
  \node (n3) at (-1,0) {};
  \node (n4) at (0,0) {};
  \node (n5) at (0,-1) {};
  \node (n6) at (0,-2) {};
  \node (n7) at (0,-3) {};
  \node (n8) at (1,0) {};
  \node (n9) at (2,0) {};
  \node (n10) at (3,0) {};

  \foreach \from/\to in {n1/n2,n2/n3,n3/n4,n4/n5,n5/n6,n6/n7,n4/n8,n8/n9,n9/n10}
    \draw (\from) -- (\to);

\end{tikzpicture}
\caption{The graph for the lattice $T_{4,4,4}$} \label{T_{4,4,4}}
\end{figure}

Given a lattice $L$, we denote $L^*=\Hom(L,\ZZ)$, and define the \emph{discriminant group} $A_L:=L^*/L$, which is a finite abelian group. In fact if we write $B$ as a matrix, then $|A_L|=|\det(B)|$. If $A_L$ is trivial (i.e. $L=L^*$), we call the lattice $L$ \emph{unimodular}. 

Given a finite abelian group $A$, a \emph{finite quadratic form} is a map $q:A \to \QQ/2\ZZ$ satisfying the following two conditions:
\begin{enumerate}
    \item For all $n \in \ZZ$ and $a \in A$, $q(na)=n^2q(a)$, and 
    \item There exists a symmetric, bilinear form $b:A\times A \to \QQ/\ZZ$ such that for all $a,a' \in A$, 
    \[
    q(a+a') \equiv q(a) + q(a') + 2b(a,a') \mod{2\ZZ}.
    \]
\end{enumerate}

If $L$ is a lattice, we can extend the bilinear form $B$ on $L$ to $L^*$ (now taking values in $\QQ$). If $L$ is even, we get an induced finite quadratic from $q_L: A_L \to \QQ/2\ZZ$, which is called the \emph{discriminant quadratic form of $L$}.

If $M \subseteq L$ is a sublattice, we denote the orthogonal complement in $L$ as 
\[
M_L^{\perp}:=\{l \in L : B(l,m)=0 \text{ for all }m \in M\}.
\]
Two lattices $M$ and $K$ are said to be \emph{orthogonal} if there exists an even, unimodular lattice $L$ such that $M \subseteq L$ and $M_L^{\perp} \cong K$.
Orthogonality will be key in our later definition of LPK3 mirror symmetry and the following lemma provides a useful criterium to check orthogonality.

\begin{lemma}{\rm (\cite[Corollary 1.6.2]{nikulin})}\label{lem:orthogonal}
The lattices $L$ and $K$ are orthogonal if and only if $q_L\cong -q_K$.
\end{lemma}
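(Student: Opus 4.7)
The plan is to reproduce Nikulin's classical argument via the correspondence between even overlattices of a lattice $M$ and isotropic subgroups of its discriminant group $A_M$ with respect to $q_M$. Recall that this correspondence sends an overlattice $M \subseteq M'$ to $H = M'/M \subseteq A_M$, that $M'$ is even iff $q_M|_H = 0$, and that $M'$ is unimodular iff $H$ equals its orthogonal complement inside $(A_M, q_M)$. Both directions of the lemma will be obtained by applying this to $M = L \oplus K$, whose discriminant form is $q_L \oplus q_K$.

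For the direction $(\Leftarrow)$, assume an anti-isometry $\phi : (A_L, q_L) \to (A_K, -q_K)$ exists and consider the graph
\[
\Gamma_\phi = \{(x, \phi(x)) : x \in A_L\} \subseteq A_L \oplus A_K.
\]
It is isotropic, since $(q_L \oplus q_K)(x, \phi(x)) = q_L(x) + q_K(\phi(x)) = 0$, and $|\Gamma_\phi|^2 = |A_L|^2 = |A_L \oplus A_K|$, so $\Gamma_\phi$ is its own orthogonal complement. The associated overlattice $\Lambda$ of $L \oplus K$ is therefore even and unimodular. Since $\Gamma_\phi \cap (A_L \oplus 0) = 0$ and $\Gamma_\phi \cap (0 \oplus A_K) = 0$, both $L$ and $K$ remain primitive in $\Lambda$. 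A direct check then shows $L^\perp_\Lambda = K$: any $y \in L^\perp_\Lambda$ lies in $K^* \cap \Lambda$, and primitivity forces $K^* \cap \Lambda = K$, so the two lattices are indeed orthogonal in the sense of the definition.

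For the direction $(\Rightarrow)$, suppose $\Lambda$ is an even unimodular lattice with $L \subseteq \Lambda$ and $L^\perp_\Lambda \cong K$. We may replace $L$ by its primitive closure (which does not change $L^\perp_\Lambda$) to assume $L$ is primitive, and then $K$ is automatically primitive. Dualising $L \oplus K \subseteq \Lambda$ yields $\Lambda = \Lambda^* \subseteq L^* \oplus K^*$, so $H := \Lambda/(L \oplus K)$ sits inside $A_L \oplus A_K$ as an isotropic subgroup. Primitivity of $L$ and $K$ implies that the two projections $H \to A_L$ and $H \to A_K$ are injective; unimodularity of $\Lambda$ forces $|H|^2 = |A_L|\cdot|A_K|$, so both projections are actually isomorphisms. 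Hence $H$ is the graph of an isomorphism $A_L \to A_K$, and isotropy of $H$ translates exactly into this being an anti-isometry between $q_L$ and $q_K$, i.e.\ $q_L \cong -q_K$.

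The main technical point on both sides is the primitivity argument used to conclude that $K^* \cap \Lambda = K$ and, symmetrically, that the projections of $H$ to $A_L$ and $A_K$ are injective. Once this is handled cleanly, everything else reduces to bookkeeping with the finite quadratic form $q_L \oplus q_K$ and the overlattice/isotropic-subgroup dictionary, and the two directions become formally dual constructions.
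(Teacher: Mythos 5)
The paper does not prove this lemma at all---it is quoted directly from Nikulin (Corollary 1.6.2)---so there is no in-paper argument to compare against. Your proof is a faithful reconstruction of Nikulin's own argument via the dictionary between even overlattices of $L\oplus K$ and isotropic subgroups of $(A_L\oplus A_K,\, q_L\oplus q_K)$, and both directions are carried out correctly: the graph construction for $(\Leftarrow)$, the index/order count showing the overlattice is unimodular, and the dual argument identifying $H=\Lambda/(L\oplus K)$ as the graph of an anti-isometry for $(\Rightarrow)$ are all sound.

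There is, however, one sentence that does not work as written: in the $(\Rightarrow)$ direction you ``replace $L$ by its primitive closure (which does not change $L^\perp_\Lambda$) to assume $L$ is primitive.'' Passing to the primitive closure $\tilde L$ indeed leaves $L^\perp_\Lambda$ unchanged, but it changes the discriminant form---$q_{\tilde L}$ is in general not isomorphic to $q_L$---so this reduction would only prove $q_{\tilde L}\cong -q_K$, not the claimed $q_L\cong -q_K$. And the statement genuinely fails for non-primitive sublattices: take $M$ primitive in an even unimodular $\Lambda$ and $L=2M$; then $L^\perp_\Lambda=M^\perp_\Lambda$ but $|A_L|=4^{\operatorname{rank}M}|A_M|\neq|A_{M^\perp}|$. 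The resolution is not a reduction but a reading of the hypothesis: Nikulin's Corollary 1.6.2 (and hence the lemma as the paper uses it, where the lattices $S(\sigma_n)$ are primitive in $L_{\mathrm{K3}}$) concerns \emph{primitive} sublattices, so primitivity of $L$ should be taken as part of the definition of ``orthogonal,'' at which point $K=L^\perp_\Lambda$ is automatically primitive and the rest of your argument goes through verbatim. With that one sentence replaced by the standing assumption of primitivity, the proof is complete and is essentially the cited one.
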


The set of all finite quadratic forms is a semi-group under direct sum. There are three classes of quadratic forms which generate the semi-group of all finite quadratic forms under direct sum, which we present now following \cite{belcastro, nikulin}, :
\begin{enumerate}
\item For $p\neq2$ prime, $k\in\NN$, and $\epsilon\in\{-1,1\}$, let $a$ be the smallest even integer that has $\epsilon$ as quadratic residue modulo $p$. Then we define  
\[
w_{p,k}^{\epsilon}:\ZZ/p^k\ZZ\to\QQ/2\ZZ\ \mbox{ by }\ w_{p,k}^{\epsilon}(1)=\frac{a}{p^k}.
\]

\item For $k\in\NN$, and $\epsilon\in\{-1,1,-5,5\}$, we define 
\[
w_{2,k}^{\epsilon}:\ZZ/2^k\ZZ\to\QQ/2\ZZ\ \mbox{ by }\ w_{2,k}^\epsilon(1)=\frac{\epsilon}{2^k}.
\]

\item For $k\in\NN$, we define the quadratic forms $u_k$ and $v_k$ on $\ZZ/2^k\ZZ \times \ZZ/2^k\ZZ$ by 
\[
u_k=\begin{bmatrix}
    0&\frac{1}{2^k}\\
    \frac{1}{2^k}&0 
    \end{bmatrix}, \qquad v_k=\frac{1}{2^k}\begin{bmatrix}
    2&1 \\
    1&2
    \end{bmatrix}.
\]
\end{enumerate} 

In Table~\ref{Lattices and Quadratic Forms} we list the lattices relevant to this work, together with their signatures and their associated quadratic forms. 

\begin{table}[h!]\centering
\begin{tabular}{l c c}
  Lattice&Signature&Form\\
  \hline 
  $U$ &(1,1)& trivial \\
  \hline
  $U(2)$		&(1,1)	& $u$  \\
  \hline
  $A_1$ 		&(0,1) 	& $w^{-1}_{2,1}$ \\
  \hline
  $A_2$		&(0,2)	& $w_{3,1}^{1}$ \\
  \hline
  $A_3$		&(0,3)	& $w_{2,2}^5$ \\
  \hline
  $D_4$		&(0,4)	& $v$ \\
  \hline
  $D_5$		&(0,5)	& $w_{2,2}^{-5}$ \\
  \hline
  $D_6$		&(0,6)	& $(w_{2,1}^1)^2$\\
  \hline
  $D_9$		&(0,9)	& $w_{2,2}^{-1}$ \\
  \hline
  $E_6$		&(0,6)	& $w_{3,1}^{-1}$\\
  \hline
  $E_7$		&(0,7)	& $w_{2,1}^{1}$\\
  \hline
  $E_8$		&(0,8)	& trivial\\  
  \hline
  $T_{4,4,4}$	&(1,9)	& $v_2$\\
  \hline
  $\langle4\rangle$		&(1,0)	& $w_{2,2}^1$\\
  \hline
  $\langle-4\rangle$ & (0,1) & $w_{2,2}^{-1}$ \\
  \hline
  $\langle8\rangle$		&(1,0)	& $w_{2,3}^1$\\
  \hline
  $\langle-8\rangle$ & (0,1) & $w_{2,3}^{-1}$ \\
  \hline
 \end{tabular}
 \caption{Lattices and quadratic forms} \label{Lattices and Quadratic Forms}
\end{table}

An important question regarding integral lattices is to what extent are they determined by certain invariants. The following result due to Nikulin answers this question. 

\begin{proposition}[{\cite[Cor. 1.13.3]{nikulin}}]\label{prop:existenceuniqueness}
An even lattice with signature $(l_+,l_-)$ and discriminant quadratic form $q$ exists and is unique if $l_+\geq1$, $l_-\geq1$, $l_+-l_-\equiv \sign q \pmod 8$, and $l_+ + l_- \geq 2 + l(A_q)$, where $l(A_q)$ denotes the minimum number of generators of $A_q$, and $\sign q$ denotes the signature of $q$. 
\end{proposition}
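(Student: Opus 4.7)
The plan is to prove existence and uniqueness separately; both are due to Nikulin. For existence, the idea is to assemble the lattice from explicit small pieces. Since any finite quadratic form decomposes as an orthogonal sum of elementary blocks of the types $w_{p,k}^\epsilon$, $u_k$, $v_k$ listed before the statement, and since each elementary block is realized by a specific lattice of rank at most $2$ (the $w_{p,k}^\epsilon$ blocks by suitable rank-$1$ lattices of the form $\langle 2a/p^k\rangle$, and $u_k$, $v_k$ by explicit rank-$2$ even lattices), one obtains by orthogonal summation a candidate $L_0$ with discriminant form $q$ but some fixed signature $(l_+^0, l_-^0)$. Milgram's formula applied to $L_0$ forces $l_+^0 - l_-^0 \equiv \sign q \pmod 8$, so the congruence hypothesis $l_+ - l_- \equiv \sign q \pmod 8$ permits the desired signature $(l_+, l_-)$ to be reached by orthogonally attaching enough copies of the unimodular lattices $U$ of signature $(1,1)$, $E_8$ of signature $(0,8)$, and $E_8(-1)$ of signature $(8,0)$. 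None of these summands alters the discriminant form, and the rank hypothesis guarantees that there is room to perform the adjustment.

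For uniqueness, one argues via the genus. Any two even lattices $L$ and $L'$ with the same signature and discriminant form become isomorphic after tensoring with $\RR$ and with every $\ZZ_p$, because over $\RR$ the signature is a complete invariant and over each $\ZZ_p$ the even quadratic form is determined up to isometry by the discriminant form (Jordan decomposition of $p$-adic quadratic forms). Thus $L$ and $L'$ lie in the same genus, and the remaining task is to show that this genus contains only one isomorphism class. This is precisely where the hypotheses $l_+ \geq 1$, $l_- \geq 1$, and $l_+ + l_- \geq 2 + l(A_q)$ enter: they place the situation in the scope of Eichler's and Kneser's theorems on the class number of indefinite quadratic forms, which via strong approximation for the spin group assert that an indefinite genus of rank at least $2 + l(A_q)$ has class number one. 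Therefore $L \cong L'$.

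The main obstacle will be the uniqueness half. The existence construction is essentially bookkeeping once the elementary realizations are on the table, and the Milgram congruence handles the mod-$8$ obstruction automatically. Uniqueness, by contrast, rests on the nontrivial class-number-one result for indefinite forms; the rank bound $l_+ + l_- \geq 2 + l(A_q)$ is sharp, ensuring that inside any representative lattice of the genus there is enough room, beyond a set of elements realizing $A_q$, to carry out the Eichler transformations that move between any two lattices in the same genus. Making the local-to-global passage explicit is the technical heart of the proof.
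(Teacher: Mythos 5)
This proposition is quoted from Nikulin and the paper supplies no proof of its own, so the only question is whether your reconstruction of Nikulin's argument holds up. The uniqueness half does: placing the two lattices in a common genus via the real and $p$-adic local data (Nikulin's Theorem 1.9.1 says the genus of an even lattice is determined by $(l_+,l_-,q)$) and then invoking Eichler/Kneser's class-number-one theorem for indefinite lattices of rank at least $2+l(A_{q_p})$ is exactly how Corollary 1.13.3 is obtained from Theorem 1.13.2, and you correctly identify this as the technical heart.

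The existence half, however, has a genuine gap. Realizing each elementary block of $q$ by a lattice of minimal rank fixes the signature $(l_+^0,l_-^0)$ of $L_0$, and padding by $U$, $E_8$ and $E_8(-1)$ can only increase each of $l_+$ and $l_-$; so your argument only reaches target signatures with $l_+\geq l_+^0$ and $l_-\geq l_-^0$, which the hypotheses do not guarantee. Concretely, take $q=(w_{2,1}^{1})^{\oplus 3}$ and target signature $(1,6)$: the congruence $1-6\equiv 3\equiv\sign q\pmod 8$ and the bound $7\geq 2+3$ both hold, so the proposition promises a unique lattice, but the minimal-rank realization is $L_0=\langle 2\rangle^{\oplus 3}$ of signature $(3,0)$, from which $(1,6)$ is unreachable by adding summands. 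The lattice does exist --- it is $\langle 2\rangle\oplus D_6$, using the rank-$6$ lattice $D_6$ to realize $(w_{2,1}^{1})^{\oplus 2}$ negatively --- which shows that the construction must allow non-minimal realizations of the blocks (equivalently, exploit the Wall relations among the elementary forms), or, as Nikulin actually does in Theorem 1.10.1, be carried out prime by prime with a local-to-global existence principle. Milgram's formula alone does not resolve this: it controls $l_+-l_-$ modulo $8$ but not the componentwise inequality your padding step needs.
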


In what follows we will be identifying lattices via their discriminant quadratic forms and signatures. All but three lattices that we will consider satisfies the requirements of Proposition~\ref{prop:existenceuniqueness} and so are uniquely determined by these invariants. Two of the three exceptions have rank 1, and so are also uniquely determined by these invariants. The final exception is $U(2)$, which is the unique even indefinite lattice of rank two with discriminanat quadratic form $u$ (e.g. because it is 2-elementary). A description in terms of the lattices listed in Table~\ref{Lattices and Quadratic Forms} is easy to reconstruct.

\subsection{LPK3 mirror symmetry}\label{ss:LPK3mirror}
We now consider mirror symmetry specifically for K3 surfaces and recall Dolgachev's definition of lattice polarized K3 (LPK3) mirror symmetry, as in \cite{dolgachev}.

Let $X$ be an algebraic K3 surface and let $\sigma$ be a purely non-symplectic automorphism of $X$ of order $n$. 
It is well known that $H^2(X,\ZZ)$ is an even, unimodular lattice of signature $(3,19)$ isomorphic to $L_{K3}=U^3 \oplus (E_8)^2$. We can then look at some interesting sublattices: the {Picard lattice} of $X$ is $\Pic(X)=H^2(X,\ZZ)\cap H^{1,1}(X,\CC)$ and the {invariant lattice} of $\sigma$ is $S(\sigma) \subseteq H^2(X,\ZZ)$ given by 
\[
S(\sigma)=\{x\in H^2(X,\ZZ) : \sigma^*(x)=x\}.
\]
Observe that $S(\sigma)$ is a primitive sublattice of $\Pic(X)$ and has signature $(1,t)$ for some $t\leq19$ (see e.g. \cite{belcastro, CLPS}). 

Let $M$ be a lattice of signature $(1,t)$, $t\leq 18$. If there exists a primitive embedding $j: M\hookrightarrow \Pic(X)$, we call $X$ an \em{ $M${-polarizable K3 surface}\footnote{If we also consider the primitive embedding as part of the data, then this is what Dolgachev calls an $M$-polarized K3 surface. Since we do not consider the embedding as the part of the data, we have a somewhat coarser version.}}. \rm Observe that all lattice polarizable K3 surfaces are algebraic.
For an $M$-polarizable K3 surface $X$, the lattice $M$ naturally embeds into $L_{K3}$, leading to our final definition.

\begin{definition}\label{def:LPK3}
Let $M$ be a primitive sublattice of $L_{K3}$ of signature $(1,t)$ with $t\leq 18$ such that $(M)^{\perp}_{L_{K3}} \cong U \oplus M^\vee$. We define $M^\vee$ (up to isometry) to be the {\em mirror lattice} to $M$. Given an $M$-polarizable K3 surface $X$ and an $M'$-polarizable K3 surface $X'$ with $M'=M^\vee$ (or equivalently $M=(M')^\vee$), we say $X$ and $X'$ are {\em LPK3 mirror K3 surfaces}. 
\end{definition}

Note that there is a whole family of K3 surfaces dual to $X$, each of which is $M^\vee$-polarizable. As for duality, if $M$ is as in Definition~\ref{def:LPK3} one can check that $M^\vee$ is also primitively embedded in $L_{K3}$, has signature $(1,18-t)$, 
and that $(M^\vee)_{L_{K3}}^\perp \cong U \oplus M$, so $(M^\vee)^\vee=M$. Notice also that $q_M \cong -q_{M^\vee}$ Hence our notion of $M$-polarizable $X$ and $M^\vee$-polarizable $X'$ being mirror K3 surfaces is a duality.

\subsection{BHK mirror symmetry}\label{sec:BHK}
The second version of mirror symmetry that we will consider is called BHK mirror symmetry. This was developed by Berglund--H\"ubsch \cite{berghub}, Berglund--Henningson \cite{berghenn}, and Krawitz \cite{krawitz}. 

Let $W \in \CC[x_0,\ldots,x_n]$ be a quasihomogeneous polynomial of degree $d$ with weight system $(q_0,\ldots,q_n;d)$, i.e. $W(\lambda^{q_0}x_0,\ldots,\lambda^{q_n}x_n)=\lambda^dW(x_0,\ldots,x_n)$ for all $\lambda \in \CC^*$. In this work, we take the convention that $q_0,\dots,q_n,d\in\ZZ_{\geq 1}$ with the property that $\gcd(q_0,\dots,q_n)=1$. 

The polynomial $W\in\CC[x_0,\ldots,x_n]$ is \emph{nondegenerate} if it has a single critical point at the origin and the weights are uniquely determined. Furthermore, we say a nondegenerate quasihomogeneous polynomial is \emph{invertible} if it has the same number of monomials and variables (this last condition is also known as the Delsarte condition in the literature).

Given an invertible polynomial $W=\sum_{i=0}^n\prod_{j=0}^nx_j^{a_{ij}}\in\CC[x_0,\ldots,x_n]$, we can construct the \emph{exponent matrix} $A_W:=(a_{ij})$. The rows represent the monomials of $W$ and the columns represent the variables. The condition that $W$ be invertible, implies that $A_W$ is invertible. 

There are three building blocks of invertible polynomials, given by the following definition. 

\begin{definition}
The following types of quasihomogeneous polynomials are called \emph{atomic types}:
\begin{enumerate}
    \item \textbf{Fermat}: $W = x^n$ 
    \item \textbf{Loop}: $W = x_0x_1^{a_1} + x_1x_2^{a_2} + \ldots + x_{n-1}x_n^{a_n} + x_nx_0^{a_0}$ where $a_i\geq2$ for all $i$
    \item \textbf{Chain}: $W = x_0^{a_0} + x_0x_1^{a_1} + x_1x_2^{a_2} + \ldots + x_{n-1}x_n^{a_n}$ where $a_i\geq2$ for all $i$
\end{enumerate}
\end{definition}

As a consequence of \cite[Theorem 1]{KrSk}, a well known result for classsifying invertible polynomials is the following.

\begin{proposition}
A nondegenerate quasihomogeneous polynomial $W \in \CC[x_0,\ldots,x_n]$ is invertible if and only if it can be written as a finite sum of atomic types in disjoint sets of variables.
\end{proposition}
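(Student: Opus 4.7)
The plan is to prove the two implications separately: that a disjoint sum of atomic types is invertible is a direct determinant computation, while the converse — every invertible polynomial has such a decomposition — is the substantive content of the Kreuzer--Skarke classification and requires a combinatorial analysis of the exponent matrix.

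For the easy direction, suppose $W = W_1 + \cdots + W_k$ is a sum of atomic types in disjoint variable sets. After reordering rows and columns, $A_W$ is block-diagonal with blocks $A_{W_i}$, so invertibility of $A_W$ reduces to invertibility of each block. The Fermat block is the $1\times 1$ matrix $(a)$; the chain block is lower bidiagonal with nonzero diagonal entries $a_0,\ldots,a_m$ and hence has determinant $\prod a_i$; and the loop block is cyclic bidiagonal with determinant $\prod a_i \pm 1$, which is nonzero whenever $a_i \geq 2$. Nondegeneracy (a unique critical point at the origin and uniquely determined weights) can likewise be checked component by component.

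For the converse, assume $W$ is nondegenerate and invertible. I associate to $W$ the bipartite incidence graph $\Gamma$ with vertex sets $\{M_0,\ldots,M_n\}$ (monomials) and $\{x_0,\ldots,x_n\}$ (variables), with an edge $M_i \sim x_j$ whenever $a_{ij} > 0$. Nondegeneracy forces every variable vertex to have positive degree, since otherwise $W$ would not depend on $x_j$. Reordering monomials and variables to group the connected components of $\Gamma$ makes $A_W$ block-diagonal with rectangular blocks $B_1,\ldots,B_r$ of sizes $k_i \times l_i$; since $\sum k_i = \sum l_i = n+1$ while $\rk A_W \leq \sum \min(k_i, l_i) \leq n+1$, invertibility forces $k_i = l_i$ for every $i$. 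It therefore suffices to classify connected bipartite incidence graphs of equal side arising from invertible, nondegenerate polynomials.

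The combinatorial classification is the main obstacle and the heart of the Kreuzer--Skarke argument. I would induct on the number of variables in a component. Any leaf variable $x_j$ (degree one in $\Gamma$) appears in a unique monomial $M_i$; if $M_i$ itself has degree one in $\Gamma$, then $M_i = x_j^a$ is Fermat and, by connectedness, is the entire component. Otherwise I would remove the pair consisting of $x_j$ and the $x_j^{a_j}$-factor of $M_i$ to produce a smaller invertible component to which the inductive hypothesis applies; the only way to re-attach $x_j$ without destroying invertibility is to extend a chain at its head, or to close a chain into a loop. The delicate point — and where the real work lies — is ruling out branching configurations: if a single variable were shared by three or more monomials, the corresponding rows of $A_W$ would satisfy a linear dependence forced by the shared weights, contradicting invertibility. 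Verifying this case analysis carefully, especially that closing a chain produces exactly a loop and not some more exotic cyclic configuration, is the main technical hurdle.
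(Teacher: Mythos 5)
The paper does not prove this proposition at all: it is quoted as a consequence of \cite[Theorem 1]{KrSk}, so there is no internal argument to compare against, and any blind proof here is really a proof of the Kreuzer--Skarke classification. Your backward direction is fine (indeed more than is needed: since ``invertible'' is defined as nondegenerate plus the Delsarte condition, and nondegeneracy is already hypothesized, it suffices to count monomials and variables in each atomic block; the determinant computation is a true but separate fact). The forward direction, however, is where the entire content lies, and your sketch does not close it. You explicitly defer the two essential steps --- ruling out branching, and showing that the only ways to attach a leaf variable are to extend a chain or close it into a loop --- to ``the main technical hurdle,'' so the classification is asserted rather than proved.

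Moreover, the one concrete mechanism you do offer for excluding branching is incorrect as stated. You claim that if a variable appeared in three or more monomials, ``the corresponding rows of $A_W$ would satisfy a linear dependence forced by the shared weights, contradicting invertibility.'' A column of a square integer matrix having three or more nonzero entries in no way forces a linear dependence among the corresponding rows; invertibility of $A_W$ alone cannot rule this out. The genuine obstruction in Kreuzer--Skarke comes from \emph{nondegeneracy} --- the requirement that $W$ have an isolated critical point at the origin (together with the uniqueness of the weights) --- which constrains each monomial to the form $x_i^{a_i}$ or $x_i^{a_i}x_j$ and thereby bounds the degrees in the incidence graph. Your induction also removes a variable and truncates a monomial without verifying that the resulting polynomial is still nondegenerate, which is not automatic. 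To repair the argument you would need to import the actual analysis of \cite{KrSk} (or simply cite it, as the paper does).
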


Let $W\in\CC[x_0,\ldots,x_n]$ be an invertible polynomial with weight system $(q_0,\ldots,q_n;d)$. We will assume from now on that the degree of $W$ equals the sum of the weights, i.e. 
\[
d=\sum_{i=0}^n q_i
\]
This is often referred to as the Calabi--Yau condition. 
This is because a quasihomogeneous polynomal of degree $d$ defines a hypersurface $Z_W$ of degree $d$ in the weighted projective space $\PP(q_0,\dots, q_n)$, and by \cite{corti-golyshev}, the Calabi--Yau condition $d=\sum_{i=0}^n q_i$ ensures that the variety $Z_W$ is a Calabi-Yau variety.

Reid (unpublished) and Yonemura \cite{yonemura} each independently showed that there are exactly 95 distinct weight systems such that the minimal resolution of $Z_W\subseteq \PP(q_0,q_1,q_2,q_3)$ yields a K3 surface. 

We now consider some groups of automorphisms of $W$.
\begin{definition}
Let $W$ be an invertible polynomial with weight system $(q_0,\ldots,q_n;d)$.

\begin{enumerate}
    \item The {\em group of diagonal symmetries} $G_W^{\max}$ of $W$ is defined by
    \[
    G_W^{max}=\{(g_0,\ldots,g_n)\in(\CC^*)^{n+1} : W(g_0x_0,\ldots,g_nx_n)=W(x_0,\ldots,x_n).\}
    \]
    
    \item Viewing the elements of $G_W^{\max}$ as diagonal matrices, the {\em special linear symmetry group} for $W$ is $\SL_W=G_W^{\max}\cap \SL_{n+1}(\CC)$.
    \item The {\em exponential grading operator group}, is denoted $J_W=\langle (e^{2\pi iq_0/d},\ldots,e^{2\pi iq_n/d})\rangle$.
\end{enumerate}
\end{definition}

Observe that each of these groups can be represented by diagonal matrices, they all are abelian and 
the entries of these matrices are all roots of unity, so $(g_0,\ldots,g_n)$=$(e^{2\pi i\cdot a_0},\ldots,e^{2\pi i\cdot a_n})$ for some $a_0,\ldots,a_n\in\QQ/\ZZ$. 
Since multiplication of roots of unity corresponds to addition of exponents, for convenience we will write these groups additively $(a_0,\ldots,a_n)\in(\QQ/\ZZ)^{n+1}$. 

With this view in mind, 
the group $\SL_W$ is the subgroup with entries that add up to an integer. 
Furthermore, if $d=\sum_{i=0}^n q_i$, the group $J_W$ is a subgroup of $\SL_W$.

The following Proposition collects two results due to Kreuzer and Krawitz \cite{kk,krawitz}. It can be found also in \cite[Section 3.1]{ABS}. 
\begin{proposition}[\cite{krawitz, kk}]
Let $W$ be an invertible polynomial with exponent matrix $A_W$ and $G_W^{\max}$ its maximal symmetry group, viewed additively.
\begin{enumerate}
    \item $|G_W^{\max}|=|\det(A_W)|$. In particular, $G_W^{\max}$ is a finite abelian group.
    \item $G_W^{\max}$ is generated by the columns of $A_W^{-1}$.
\end{enumerate}
\end{proposition}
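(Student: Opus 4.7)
The plan is to identify $G_W^{\max}$ with the finite abelian group $A_W^{-1}\ZZ^{n+1}/\ZZ^{n+1}$, after which both assertions fall out by elementary lattice arithmetic.

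First I would write out the condition $W(g_0x_0,\ldots,g_nx_n)=W(x_0,\ldots,x_n)$ explicitly. With $W=\sum_{i=0}^{n}\prod_{j=0}^{n}x_j^{a_{ij}}$, substitution gives
\[
W(g_0x_0,\ldots,g_nx_n)=\sum_{i=0}^{n}\left(\prod_{j=0}^{n}g_j^{a_{ij}}\right)\prod_{j=0}^{n}x_j^{a_{ij}}.
\]
Since $A_W$ is invertible its rows are pairwise distinct, so the $n+1$ monomials of $W$ are distinct, and the equality decouples into the system $\prod_{j=0}^{n}g_j^{a_{ij}}=1$ for $i=0,\ldots,n$.

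Next I would pass to the additive picture highlighted just before the proposition: set $g_j=\exp(2\pi i\,\alpha_j)$ with $\alpha_j\in\CC/\ZZ$, so the system becomes the single matrix congruence $A_W\vec\alpha\equiv 0\pmod{\ZZ^{n+1}}$, equivalently $\vec\alpha\in A_W^{-1}\ZZ^{n+1}$ modulo $\ZZ^{n+1}$. Because $A_W$ is invertible over $\QQ$, this lattice sits inside $\QQ^{n+1}$, so the $\alpha_j$ are automatically rational and each $g_j$ is a root of unity; the assignment $(g_j)\mapsto(\alpha_j)$ then gives a group isomorphism
\[
G_W^{\max}\ \cong\ A_W^{-1}\ZZ^{n+1}\,\big/\,\ZZ^{n+1}.
\]

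Both assertions now drop out. Multiplication by $A_W$ is a $\ZZ$-linear isomorphism from $A_W^{-1}\ZZ^{n+1}$ onto $\ZZ^{n+1}$ of free $\ZZ$-modules of rank $n+1$, so (for instance via the Smith normal form of $A_W$) the index of $\ZZ^{n+1}$ in $A_W^{-1}\ZZ^{n+1}$ equals $|\det A_W|$, yielding (1) together with the finiteness and abelian structure of $G_W^{\max}$ for free. For (2), by construction $A_W^{-1}\ZZ^{n+1}$ is generated as a $\ZZ$-module by the images $A_W^{-1}e_0,\ldots,A_W^{-1}e_n$ of the standard basis vectors, that is, by the columns of $A_W^{-1}$; their classes modulo $\ZZ^{n+1}$ therefore generate $G_W^{\max}$ in the additive presentation. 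The only point requiring any care is the decoupling in the first step, which uses that the monomials of $W$ are linearly independent, and this is immediate from the invertibility of $A_W$.
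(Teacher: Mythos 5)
The paper does not prove this proposition; it is quoted from Krawitz and Kreuzer--Skarke, so there is no in-text argument to compare against. Your proof is correct and is essentially the standard one from those sources: the decoupling into $\prod_j g_j^{a_{ij}}=1$ is justified by the distinctness of the monomials, the passage to the additive picture identifies $G_W^{\max}$ with $A_W^{-1}\ZZ^{n+1}/\ZZ^{n+1}$ (note $\ZZ^{n+1}\subseteq A_W^{-1}\ZZ^{n+1}$ since $A_W$ has integer entries, which is what makes the quotient meaningful), and the index and generating-set claims then follow from Smith normal form and the definition of $A_W^{-1}\ZZ^{n+1}$. No gaps.
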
 

Let $(W,G)$ be a pair consisting of a quasihomogeneous polynomal $W$ and a group of diagonal symmetries $G\leq G^{max}_W$. From this point of view we can consider the following geometry. In order for a subgroup $G\leq G_W^{\max}$ to act on the hypersurface $Z_W$, we must have $J_W\leq G$. The equivalence relation defining points in weighted projective space is just the action of $J_W$, so $J_W$ acts trivially on $Z_W$. 

Set $\widetilde{G}=G/J_W$, and we define the variety $Z_{W,G}=Z_W/\widetilde{G}$. However, for the resulting quotient space to be a Calabi-Yau manifold, the group must preserve the canonical bundle, which means that the subgroup $G$ must also be a subgroup of $\SL_W$. When $G \neq J_W$, the group action may introduce new singularities in addition to those coming from weighted projective space. However, all of the singularities of $Z_{W,G}$ will be located on the so--called ``coordinate curves.''
 
To summarize, let $W$ be an invertible polynomial, quasihomogeneous with respect to one of Reid/Yonemura's 95 weight systems, and $G\leq G^{max}_W$ satisfying $J_W\leq G\leq \SL_W$. Then the minimal resolution of $Z_{W,G}$ is a K3 surface. We denote it $X_{W,G}$.

With this background, we can now give the BHK mirror symmetry construction. For each pair $(W,G)$ as defined above, BHK mirror symmetry produces another pair $(W^T,G^T)$. 
The dual polynomial $W^T$ is defined as the polynomial associated to the transpose of $A_W$, i.e. $A^T_W=A_{W^T}$.
With the conditions above satisfied, $W^T$ will also be an invertible polynomial (which again can be proven from the atomic type decomposition). In fact, one can check that if $W$ is quasihomogeneous with respect to one of the 95 weight systems, then $W^T$ is as well. 

Finding a dual group was a huge breakthrough in mirror symmetry, given by Berglund--Henningson \cite{berghenn} and in the Ph.D. dissertation of Krawitz \cite{krawitz}. We give the definition and some important properties below.

\begin{definition}
For an invertible polynomial $W$ with exponent matrix $A_W$ and subgroup $G \leq G_W^{\max}$, we define the {dual group} to be $G^T=\{g\in G_{W^T}^{\max} : gA_Wh^T \in \ZZ$ for all $h \in G\}$.
\end{definition}

The definition of the dual group 
does in fact have some very nice properties:

\begin{proposition}[{ \cite[Prop. 3]{ABS}}]
Let $W$ be an invertible polynomial and $G_1,G_2 \leq G_W^{\max}$.
\begin{enumerate}
    \item $(G_1^T)^T=G_1$;
    \item If $G_1 \leq G_2$, then $G_2^T \leq G_1^T$ and $G_2/G_1 \cong G_1^T/G_2^T$;
    \item $(J_W)^T=SL_{W^T}$, and $(SL_W)^T=J_{W^T}$.
\end{enumerate}\label{prop:G^T}
\end{proposition}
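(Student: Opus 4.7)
The plan is to define a bilinear pairing
\[
B \colon G_{W^T}^{\max} \times G_W^{\max} \to \QQ/\ZZ, \qquad B(g,h) = g\, A_W\, h^T \pmod{\ZZ},
\]
show it is a perfect pairing of finite abelian groups, and then recognize $G^T$ as the annihilator of $G$ under $B$. Parts (1) and (2) will then follow from standard Pontryagin duality, while part (3) reduces to a short direct calculation. To see that $B$ is well defined, view elements of $G_W^{\max}$ and $G_{W^T}^{\max}$ as row vectors in $(\QQ/\ZZ)^{n+1}$; the defining condition $W(g\cdot x)=W(x)$ translates to $A_W h^T\in\ZZ^{n+1}$ for $h\in G_W^{\max}$, and analogously $gA_W\in\ZZ^{n+1}$ for $g\in G_{W^T}^{\max}$. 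Changing the lift of $g$ or $h$ by an integer vector then alters $gA_Wh^T$ only by an integer, so $B$ is well defined and bilinear.

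The key step is perfectness. By the Krawitz--Kreuzer result just cited, $G_W^{\max}$ is generated by the columns $v_0,\dots,v_n$ of $A_W^{-1}$ (viewed in $(\QQ/\ZZ)^{n+1}$). Since $A_W v_k = e_k$, pairing against the generator $v_k^T$ simply extracts a coordinate:
\[
B(g,v_k^T) = g\,e_k = g_k \pmod{\ZZ}.
\]
So if $B(g,\cdot)$ vanishes on all of $G_W^{\max}$, then every $g_k\in\ZZ$ and hence $g=0$ in $(\QQ/\ZZ)^{n+1}$. Thus $B$ is nondegenerate in the first variable; combined with the equality of orders $|G_W^{\max}|=|\det A_W|=|\det A_W^T|=|G_{W^T}^{\max}|$, this forces $B$ to be a perfect pairing.

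Items (1) and (2) are then immediate from the general theory of annihilators in a perfect pairing of finite abelian groups. By the very definition, $G^T$ is the annihilator of $G$, so $(G_1^T)^T=G_1$, annihilation reverses inclusions, and $B$ induces a perfect pairing $G_2/G_1 \otimes G_1^T/G_2^T \to \QQ/\ZZ$, yielding $G_2/G_1\cong G_1^T/G_2^T$ (using that a finite abelian group is non-canonically isomorphic to its Pontryagin dual).

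For (3), the Calabi--Yau condition $\sum_j q_j=d$ combined with quasihomogeneity gives $\sum_j a_{ij}q_j=d$ for every row $i$ of $A_W$, so $A_W\,(q_0/d,\dots,q_n/d)^T=(1,\dots,1)^T$. Writing $j_W=(q_0/d,\dots,q_n/d)$ for the generator of the cyclic group $J_W$, we obtain $B(g,j_W)=\sum_{i=0}^n g_i \pmod{\ZZ}$, so $g\in J_W^T$ iff $\sum g_i\in\ZZ$, which is precisely the condition defining $\SL_{W^T}$. Applying the same argument with $W$ and $W^T$ interchanged (using $W^{TT}=W$) yields $J_{W^T}^T=\SL_W$; taking the $T$-dual of both sides via (1) gives $\SL_W^T=J_{W^T}$. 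The only nontrivial content in the whole argument is the perfectness of $B$, which is the main (and really the only) obstacle; once it is settled by the coordinate-extraction computation above, the rest is formal.
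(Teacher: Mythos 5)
Your argument is correct: the pairing $B(g,h)=gA_Wh^T \bmod \ZZ$ is exactly the right tool, your coordinate-extraction computation against the columns of $A_W^{-1}$ does establish perfectness, and parts (1)--(3) then follow as you say (note only that quasihomogeneity alone already gives $\sum_j a_{ij}q_j=d$, so the Calabi--Yau condition is not actually needed for $(J_W)^T=\SL_{W^T}$). The paper does not prove this proposition itself but imports it from \cite{ABS}, and your proof is essentially the standard duality argument given there.
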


In particular, if $J_W \leq G\leq SL_W$, we have that $J_{W^T}\leq G^T\leq SL_{W^T}$. So we can do the same geometric construction for $(W^T,G^T)$ as we did for $(W,G)$, producing a K3 surface $X_{W^T,G^T}$ and we arrive at the following definition of BHK mirror symmetry for K3 surfaces.
\begin{definition}
 The {\em BHK mirror} (dual manifold) of the K3 surface $X_{W,G}$, with $J_W\leq G\leq SL_W$, is $X_{W^T,G^T}$.\footnote{In fact this definition can be extended to CY orbifolds using the same construction, and to their crepant resolutions, if they exist. But we want to avoid these technicalities here.}
\end{definition}

\section{Main Result and New Methods}\label{s:MainResult}

We have introduced two forms of mirror symmetry for K3 surfaces, namely BHK mirror symmetry and LPK3 mirror symmetry. The question is, if both versions of mirror symmetry apply to a given K3 surface, do they agree? 

We turn our attention specifically to K3 surfaces $X_{W,G}$ defined by the pair $(W,G)$ with $W$ an invertible polynomial of the form 
\begin{equation}\label{eq:poly}
W=x_0^n+g(x_1,x_2,x_3),\quad n\geq 2.
\end{equation}
that is quasihomogeneous with respect to one of the 95 weight systems of Reid and Yonemura and $G$ satisfying $J_W\leq G\leq \SL_W$. As previously mentioned, 
if $W$ satisfies these two conditions, then $W^T$ does as well. In what follows, to ease notation, we fix a pair $(W,G)$ and denote $X=X_{W,G}$, and $X^T=X_{W^T,G^T}$, whenever there is no possibility of confusion.

The natural question which arises is the following: are $X$ and $X^T$ LPK3 mirror? In other words, can we find an integral lattice $M$, which embeds primitively into $\Pic(X)$, such that $M^\vee$ also embeds primitively into $\Pic(X^T)$? This is what we mean for the BHK and LPK3 mirror symmetry constructions ``to agree". 

The answer to this question is given in the affirmative in the following theorem. 

\begin{theorem}\label{t:main}
Let $W$ be an invertible polynomial of the form $x_0^n+g(x_1,x_2,x_3)$ quasihomogeneous with respect to one of Reid and Yonemura's 95 weight systems, and $G$ a group of symmetries of $W$ satisfying $J_W\leq G \leq SL_W$. Then the K3 surface $X_{W,G}$ and its BHK mirror $X_{W^T,G^T}$ form an LPK3 mirror pair.
\end{theorem}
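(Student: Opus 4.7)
My plan is to prove the theorem by directly identifying a mirror pair of lattices: I will show that $S(\sigma_n)$ embeds primitively into $\Pic(X_{W,G})$, that $S(\sigma_n^T)$ embeds primitively into $\Pic(X_{W^T,G^T})$, and that these two lattices satisfy $(S(\sigma_n))^{\perp}_{\klat} \cong U \oplus S(\sigma_n^T)$. By Lemma~\ref{lem:orthogonal} together with Proposition~\ref{prop:existenceuniqueness}, it suffices to verify that the signatures add up correctly (namely $(1,t)$ and $(1,18-t)$), that the ranks of the discriminant groups admit the needed number of generators, and that $q_{S(\sigma_n)} \cong -q_{S(\sigma_n^T)}$. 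Since the cases $n=2$ and $n$ prime and $n$ composite with $n \neq 4,8,12$ are already handled in \cite{ABS,CLPS,CP}, the real content is the three residual cases $n\in\{4,8,12\}$.

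The first main step is enumeration: for each of Reid and Yonemura's 95 weight systems compatible with a polynomial of the form $W=x_0^n+g(x_1,x_2,x_3)$ with $n\in\{4,8,12\}$, I would list every admissible pair $(W,G)$ satisfying $J_W\leq G\leq \SL_W$, together with the BHK dual $(W^T,G^T)$. Because $G$ acts trivially on $x_0$ (it must in order to commute with $\sigma_n$ in the relevant sense) the automorphism $\sigma_n$ descends to $X_{W,G}$ and is purely non-symplectic of order $n$, and similarly for $\sigma_n^T$; so the invariant lattices $S(\sigma_n)$ and $S(\sigma_n^T)$ are well defined and hyperbolic of signature $(1,t)$ and $(1,t^T)$ respectively.

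The second main step is the computation of $S(\sigma_n)$ and $S(\sigma_n^T)$. Here I would exploit the ``equivalence class'' idea the introduction advertises: within a fixed weight system, any deformation of $W$ through invertible polynomials (and any polynomial isomorphism of the ambient weighted $\PP^3$ permuting or rescaling coordinates) that preserves the $\sigma_n$-action preserves the invariant lattice as well. This lets me replace a complicated $W$ by a simpler representative—typically one with more Fermat-type monomials—for which the fixed locus of $\sigma_n$ and the singularity resolution can be analyzed by hand, and the methods of \cite{CP} adapted to extract $S(\sigma_n)$. The candidate lattices will come from Table~\ref{Lattices and Quadratic Forms}; their discriminant forms, once tabulated for both $X$ and $X^T$, should pair off as $q$ and $-q$.

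The hard part will be the exceptional subcollection where no convenient deformation representative exists and the topological fixed-point data do not determine $S(\sigma_n)$ uniquely. For these I plan to realize $X_{W,G}$ (after deformation) as a quotient of a Fermat-type surface in a weighted $\PP^3$ and use the explicit lines and their intersection matrix on such a Fermat surface to pin down enough divisor classes to span—or at least give a finite-index overlattice of—$S(\sigma_n)$; the remaining ambiguity is then eliminated by matching signature, rank, and discriminant form against the mirror prediction produced from $S(\sigma_n^T)$. Once every case in the enumeration is dispatched in this way and the matching of discriminant forms recorded in the appendix tables, Proposition~\ref{prop:existenceuniqueness} forces $(S(\sigma_n))^{\perp}_{\klat} \cong U \oplus S(\sigma_n^T)$, establishing Theorem~\ref{t:main}.
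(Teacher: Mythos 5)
Your overall skeleton matches the paper's: reduce to $n\in\{4,8,12\}$, enumerate the pairs $(W,G)$, group them into equivalence classes sharing a common invariant lattice, compute $S(\sigma_n)$ for one representative per class, and verify $r_X=20-r_{X^T}$ and $q_X\cong -q_{X^T}$. But there are two genuine gaps. First, your equivalence classes are too small: you only allow deformations within a weight system and coordinate permutations or rescalings. The paper's crucial extra tool is the Kelly--Shoemaker theorem (proved via Shioda maps): whenever two pairs have equal dual groups $G^T=(G')^T$, the varieties $Z_{W,G}$ and $Z_{W',G'}$ are birational --- even when $W$ and $W'$ live in \emph{different} weight systems --- and this birational map is $\sigma_n$-equivariant, hence an isomorphism of K3 surfaces preserving the invariant lattice. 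Without this, many classes have no representative with $\tilde G$ trivial, which is exactly the situation needed for the next step. (Also, your parenthetical that $G$ must act trivially on $x_0$ to commute with $\sigma_n$ is off: all diagonal symmetries commute, and the tables contain groups acting nontrivially on $x_0$.)

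Second, and more seriously, your method for pinning down $S(\sigma_n)$ exactly is circular. The coordinate and exceptional curves generate a finite-index sublattice $L_\cB\subseteq S(\sigma_n)$ of the correct rank, and for $n=4,8,12$ this sublattice typically admits many proper overlattices, so its rank and discriminant form do not determine $S(\sigma_n)$. You propose to eliminate the residual ambiguity by ``matching against the mirror prediction produced from $S(\sigma_n^T)$'' --- but the mirror prediction is precisely the statement being proved, so it cannot be used as an input; the same ambiguity afflicts $S(\sigma_n^T)$ on the dual side. The paper instead proves that $L_\cB\hookrightarrow\Pic(X)$ is a \emph{primitive} embedding, which forces $L_\cB=S(\sigma_n)$ by equality of ranks: for representatives with $G=J_W$ this is done by identifying $L_{\cB'}$ with Belcastro's computation of the generic Picard lattice for the weight system and composing with the primitive specialization map $\spec:\Pic(X_{\bar\eta})\to\Pic(X)$; for the three exceptional classes one first generates the larger lattice $S(\tau)$, $\tau=\sigma_4^2$, using explicit lines on the (weighted) Fermat surface, and then exhibits $L_\cB$ as a primitive sublattice of $S(\tau)$. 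Your instinct to use Fermat lines is correct, but they must be used to certify primitivity of the embedding, not merely to enlarge a generating set whose span is then guessed.
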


Theorem~\ref{t:main} has already been proven for $n=2$ by Artebani-Boissière-Sarti \cite{ABS}, for the case when $n$ is prime (and not equal to 2) by Comparin-Lyon-Priddis-Suggs \cite{CLPS}, and for $n$ composite, except for $n=4,8,12$ by Comparin-Priddis \cite{CP}. In the present work we complete the proof by looking at the remaining cases $n=4,8,12$. 

In all cases, Theorem~\ref{t:main} is proven by finding an appropriate lattice polarization of $X$, determined by a specific non-symplectic automorphism of $X$ of order $n$. Indeed, we observe that when $W$ is as in \eqref{eq:poly}, the surface $X$ admits the purely non-symplectic automorphism of order $n$ defined by 
\begin{equation}\label{e:sigman}
\sigma_n(x_0,x_1,x_2,x_3)=(\zeta_nx_0,x_1,x_2,x_3).    
\end{equation}
Here $\zeta_n$ denotes a primitive $n$-th root of unity. The invariant lattice $S(\sigma_n)$ polarizes $X$, i.e. the inclusion map $S(\sigma_n)\hookrightarrow\Pic(X)$ is a primitive embedding. 

The defining polynomial $W^T$ of the BHK mirror K3 surface $X^T$ also has form \eqref{eq:poly}, and so it admits a non-symplectic automorphism $\sigma_n^T$ of the same form \eqref{e:sigman}, and so the invariant lattice $S(\sigma_n^T)$ polarizes $X^T$. This gives us two candidates for $M$ and $M^\vee$, namely $S(\sigma_n)$ and $S(\sigma_n^T)$. 

The proof of Theorem~\ref{t:main} consists of verifying that $S(\sigma_n)^\vee\cong S(\sigma_n^T)$, i.e. $S(\sigma_n)$ and $S(\sigma_n^T)$ are mirror lattices in the sense of Definition~\ref{def:LPK3}. 
By Lemma~\ref{lem:orthogonal} and Proposition~\ref{prop:existenceuniqueness}, we can verify this fact by finding the rank and quadratic form of the invariant lattices $S(\sigma_n)$ and $S(\sigma_n^T)$. Indeed if we let $r_X=\rk S(\sigma_n)$, $r_{X^T}=\rk S(\sigma_n^T)$ and let $q_X$ and $q_{X^T}$ denote the respective discriminant quadratic forms, we know $S(\sigma_n)$ and $S(\sigma_n^T)$ are mirror lattices if 
\[
r_X=20-r_{X^T} \qquad \text{ and }\qquad q_X=-q_{X^T}.
\]

For $n$ prime, $r_X$ and $q_X$ can be computed only from the topological data of the fixed locus (see \cite{ABS}, \cite{CLPS}). 
Chiodo--Kalashnikov--Veniani \cite{ChKV} have an alternate proof that mirror symmetry holds in case $n$ is prime, which uses what they call semi--Calabi--Yau varieties. Both proofs rely on the fact that $S(\sigma_n)$ has a certain form (called $p$-elementary) when $n$ is prime (see \cite{CP} for a discussion of the differences in these methods). 

However, when $n$ is composite, neither of these methods determine the lattice uniquely. In this case, the authors of \cite{CP} determined $S(\sigma_n)$ by actually finding generators of the lattice, which are certain special divisors on $X$. 
Once the lattice is known, one can compute $r_X$ and $q_X$ directly to verify LPK3 mirror symmetry. 
The main difficulty with $n=4,8,12$ is showing that the proposed set of generators actually generate $S(\sigma_n)$. We will describe those generators now. 

Recall $Z_{W,G}$ is defined as the quotient $Z_W/\widetilde{G}$; the generators of the invariant lattice come from the resolution of singularities $\pi:X\to Z_{W,G}$. There are two important classes of curves on $X$. First of these are what we call the ``coordinate curves''. We denote by $C_{x_i}$ the (strict transforms of the) divisor defined by ${x_i=0}$. Let $\cK$ the set of smooth irreducible components of the coordinate curves $C_{x_i}$ for $i=0,1,2,3$ (if any of the irreducible components are singular, we omit them). Clearly, $\sigma_n$ leaves each of the divisors $C_{x_i}$ invariant (but not necessarily pointwisely), so $\sigma_n$ acts on $\cK$. For $C\in \cK$, we define $G_C$ as the isotropy group, $\cK/\sigma_n$ as the set of orbits, and 
\[
b_C=\tfrac 1{|G_C|}\sum_{i=0}^{n-1}\sigma_n^iC.
\]
Notice that if $C\in\cK$ is smooth, then $b_C=C$. 
In general, $b_C$ is a sum of some irreducible components of one of the $C_{x_i}$.  

Next, we look at the exceptional curves of $\pi:X\to Z_{W,G}$ and define $\cE$ as the set of exceptional curves of $\pi$. Because $W$ is nondegenerate, all of the singularities of $Z_{W,G}$ lie on the coordinate curves, and so $\sigma_n$ acts on the set $\cE$. Given $E\in\cE$, let $G_E$ denote the isotropy group of $E$ and let $\cE/\sigma_n$ denote the set of orbits of this action. As with the coordinate curves, we set 
\[
b_E=\tfrac 1{|G_E|}\sum_{i=0}^{n-1}\sigma_n^iE.
\]
This is the sum of all curves in the orbit of $E$. In particular, if $E$ is invariant under $\sigma_n$, then $b_E=E$. 

The first step in determining $S(\sigma_n)$ is to determine the rank $r_X$. This is accomplished by the following lemma:

\begin{lemma}[{\cite[Lemma 3.5]{CP}}]\label{l:rank}
The rank $r_X$ of $S(\sigma)$ is equal to 1 plus the number of orbits of exceptional curves in the blow-up $X_{W,G}\to Z_{W,G}$, i.e. 
\[
\rk S(\sigma_n)=1+|\cE/\sigma_n|.
\]
\end{lemma}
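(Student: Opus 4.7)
The plan is to compute $\rk S(\sigma_n)$ by decomposing $\Pic(X)\otimes\QQ$ along the resolution $\pi\colon X\to Z_{W,G}$ and tracking the $\sigma_n^*$-action on each summand. First, because $\sigma_n$ is purely non-symplectic of order $n$, the action of $\sigma_n^*$ on $H^{2,0}(X)=\CC\cdot\omega_X$ is by a primitive $n$-th root of unity $\zeta_n\neq 1$, and analogously on $H^{0,2}(X)$. Consequently no transcendental class can be $\sigma_n^*$-fixed, so $S(\sigma_n)\otimes\QQ$ is precisely the $+1$-eigenspace of $\sigma_n^*$ acting on $\Pic(X)\otimes\QQ$. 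This reduces the problem to computing $\dim_\QQ(\Pic(X)\otimes\QQ)^{\sigma_n^*}$.

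Next, because $\pi$ is a minimal resolution of isolated ADE singularities one has the standard orthogonal decomposition
\[
\Pic(X)\otimes\QQ \;=\; \pi^*\Pic(Z_{W,G})\otimes\QQ \;\oplus\; \bigoplus_{E\in\cE}\QQ\cdot E,
\]
which is $\sigma_n^*$-equivariant since $\sigma_n$ descends to an automorphism $\bar\sigma_n$ of $Z_{W,G}$ with $\pi\circ\sigma_n=\bar\sigma_n\circ\pi$. Now $\sigma_n$ permutes the finite set $\cE$, so the invariant subspace of the second summand is the $\QQ$-span of the orbit sums $b_E$ and has dimension exactly $|\cE/\sigma_n|$ by elementary representation theory of cyclic groups on permutation modules. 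Therefore
\[
\rk S(\sigma_n) \;=\; \dim_\QQ\bigl(\pi^*\Pic(Z_{W,G})\otimes\QQ\bigr)^{\bar\sigma_n^*}\;+\;|\cE/\sigma_n|,
\]
and it remains to show that the first term equals $1$.

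To handle this last contribution, I would argue that for a very general choice of coefficients of $W=x_0^n+g(x_1,x_2,x_3)$ inside its deformation family the Picard lattice $\Pic(Z_{W,G})$ has rank exactly $1$, generated by the ample hyperplane class $H$ pulled back from $\PP(q_0,\dots,q_3)$; as $H$ is visibly $\bar\sigma_n$-invariant, this provides the needed contribution of $1$. Since $\rk S(\sigma_n)$ is a topological invariant of the pair $(X,\sigma_n)$, hence locally constant in any family preserving $\sigma_n$, the generic computation yields $\rk S(\sigma_n)=1+|\cE/\sigma_n|$ for every member. The main obstacle is precisely this last step: establishing that the generic $\Pic(Z_{W,G})$ has rank $1$, which is a Noether--Lefschetz-type statement for K3 hypersurfaces in weighted projective space. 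For the 95 Reid--Yonemura weight systems this can either be verified case-by-case or deduced from general results on Noether--Lefschetz loci in toric ambient spaces; once it is granted, the remaining ingredients---reduction to Picard, equivariant splitting along the resolution, and the orbit-sum count on $\cE$---are either formal or standard.
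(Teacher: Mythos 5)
Your first three steps are sound and standard: any $\sigma_n^*$-fixed class pairs trivially with $\omega_X$ (since $\langle x,\omega_X\rangle=\langle x,\zeta_n\omega_X\rangle$), hence lies in $\Pic(X)$; the splitting of $\Pic(X)\otimes\QQ$ along the resolution is $\sigma_n^*$-equivariant; and the invariants of the permutation module on $\cE$ have dimension $|\cE/\sigma_n|$. The gap is the final step, and it is not merely a missing verification: the claim that the very general member of the equivariant family has $\rk\bigl(\Pic(Z_{W,G})\otimes\QQ\bigr)=1$ is false in general, because the family can force reducible coordinate curves on \emph{every} fiber. The paper's own deformation example exhibits this: for $W_t=x^4+y^4+z^3w+tzw^3+(1-t)w^4$ modulo the involution $(x,y,z,w)\mapsto(-x,-y,z,w)$, the curve $\{w=0\}\cap Z_{W_t}$ is the union of the four lines $x=\zeta y$ with $\zeta^4=-1$ for every $t$, giving at least four independent rational divisor classes on the singular model; no Noether--Lefschetz argument can reduce this to rank $1$. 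What you actually need --- and what is true --- is only that the $\bar\sigma_n$-\emph{invariant} part of this summand has rank $1$ (in the example the four lines form a single $\sigma_4$-orbit, so they contribute only $1$ to the invariants). Separately, even in cases where your generic claim held, transporting it to the special member via local constancy of $\rk S(\sigma_n)$ would also require $|\cE/\sigma_n|$ to be constant in the family, an equisingularity statement you have not addressed.

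The lemma is quoted here from \cite{CP} rather than reproved, but the step you are missing has a short argument that needs no genericity. Since $nq_0=d$, the quotient of $\PP(q_0,q_1,q_2,q_3)$ by $\sigma_n$ is $\PP(d,q_1,q_2,q_3)$ via $x_0\mapsto x_0^n$, and the image of $Z_W$ is the hypersurface $y_0+g(x_1,x_2,x_3)=0$, which projects isomorphically onto $\PP(q_1,q_2,q_3)$ (the point $(1:0:0:0)$ does not lie on it). Hence $Z_{W,G}/\sigma_n$ is a finite quotient of $\PP(q_1,q_2,q_3)$, whose rational divisor class group has rank $1$, and therefore
\[
\bigl(\Pic(Z_{W,G})\otimes\QQ\bigr)^{\bar\sigma_n}\;\cong\;\Pic\bigl(Z_{W,G}/\bar\sigma_n\bigr)\otimes\QQ\;\cong\;\QQ
\]
for \emph{every} member of the family. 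Substituting this into your decomposition yields $\rk S(\sigma_n)=1+|\cE/\sigma_n|$ directly, with no appeal to very general members.
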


The second step in determining $S(\sigma_n)$ is to find a minimal set of generators. 
It turns out the generators always come from the coordinate curves and the exceptional curves. Let $\cB=\set{b_E\mid E\in \cE/\sigma_n}\cup \set{b_C\mid C\in \cK/\sigma_n}$, and let $L_\cB$ denote the lattice generated by $\cB$. By construction, $L_\cB$ is clearly a sublattice of $S(\sigma_n)$, and in general, $\cB$ will have more than $r_X$ elements, so some of these generators are redundant and we can omit them. In any case, since we have an explicit set of generators, one can easily compute that $\rk L_\cB=r_X$, hence $S(\sigma_n)$ is an overlattice of $L_\cB$. We can then compute the discriminant quadratic form of $L_\cB$.  

The final step is to show that $L_\cB=S(\sigma_n)$. With $n$ composite, this can often be done by considering the possible overlattices of $L_\cB$, which are characterised by isotropic groups of the discriminant group $A_{L_\cB}$ (see \cite[Prop. 1.4.1]{nikulin}). In most cases, there are no such isotropic subgroups, and therefore no proper overlattices. Hence in these cases, one knows that $L_\cB=S(\sigma_n)$ (see also \cite[Method I]{CP}). 

The trouble comes when the lattice $L_\cB$ has many overlattices, as is almost universally the case when $n=4,8,12$. In this case, we can show $L_\cB=S(\sigma_n)$ by showing that the inclusion $L_\cB\hookrightarrow S(\sigma_n)$ is a primitive embedding. If this is true, then since they both have the same rank, we get equality, $L_\cB=S(\sigma_n)$.  

The main method for showing $L_\cB\hookrightarrow S(\sigma_n)$ is a primitive embedding is to find some other lattice $M$ together with two primitive embeddings $\iota_L:L_\cB\hookrightarrow M$ and $\iota_\sigma:S(\sigma_n)\hookrightarrow M$, such that the inclusion $\iota:L_\cB\hookrightarrow S(\sigma_n)$ satisfies $\iota_L=\iota_\sigma\circ\iota$ as in the following lemma.

\begin{lemma}\label{lem:embed}
Let $L$, $K$ and $M$ be integral lattices with injective maps $\iota_1:L\to M$, $\iota_2:K\hookrightarrow M$ and $\iota_3:L\hookrightarrow K$ such that $\iota_1=\iota_2\circ\iota_3$ as in the following diagram: 
\begin{center}
\begin{tikzpicture}
  \matrix (m)
    [ matrix of math nodes,
      row sep    = 3em,
      column sep = 4em
    ]
    {
      L              &  M \\
      K &             \\
    };
  \path
    (m-1-1) edge [{Hooks[right,length=0.8ex]}->] node [left] {$\iota_3$} (m-2-1)
    (m-1-1.east |- m-1-2)
      edge [{Hooks[right,length=0.8ex]}->] node [above] {$\iota_1$} (m-1-2)
    (m-2-1.east) edge [{Hooks[right,length=0.8ex]}->] node [below] {$\iota_2$} (m-1-2);
\end{tikzpicture}
\end{center}

\begin{enumerate}
    \item If $\iota_3$ and $\iota_2$ are primitive embeddings, then $\iota_1$ is as well. 
    \item If $\iota_1$ is a primitive embedding, then $\iota_3$ is as well.
\end{enumerate}
\end{lemma}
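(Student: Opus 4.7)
The plan is to work directly from the definition that an injective map of lattices $\iota\colon A\hookrightarrow B$ is a primitive embedding precisely when $B/\iota(A)$ is torsion-free, i.e.\ whenever $b\in B$ and $nb\in \iota(A)$ for some nonzero integer $n$, then $b\in \iota(A)$. Both parts should be formal diagram chases using only this characterization together with injectivity of the given maps; I do not anticipate a real obstacle, so the main task is just to keep the applications of primitivity in the correct order.

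For part (1), I would start with an element $m\in M$ and an integer $n\neq 0$ with $nm\in \iota_1(L)$, and show $m\in \iota_1(L)$. Since $\iota_1=\iota_2\circ \iota_3$, we have $nm\in \iota_2(\iota_3(L))\subseteq \iota_2(K)$. Primitivity of $\iota_2$ then yields some $k\in K$ with $\iota_2(k)=m$. Pushing the hypothesis $nm\in \iota_2(\iota_3(L))$ through this, $\iota_2(nk)=nm=\iota_2(\ell')$ for some $\ell'\in \iota_3(L)$, so injectivity of $\iota_2$ forces $nk=\ell'\in \iota_3(L)$. Now primitivity of $\iota_3$ gives $k\in \iota_3(L)$, hence $m=\iota_2(k)\in \iota_2(\iota_3(L))=\iota_1(L)$, as required.

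For part (2), I would begin with $k\in K$ and $n\neq 0$ such that $nk\in \iota_3(L)$, and show $k\in \iota_3(L)$. Applying $\iota_2$ yields $n\,\iota_2(k)=\iota_2(nk)\in \iota_2(\iota_3(L))=\iota_1(L)$. Primitivity of $\iota_1$ then implies $\iota_2(k)\in \iota_1(L)=\iota_2(\iota_3(L))$, i.e.\ $\iota_2(k)=\iota_2(\iota_3(\ell))$ for some $\ell\in L$. Finally, injectivity of $\iota_2$ gives $k=\iota_3(\ell)\in \iota_3(L)$, proving $\iota_3$ is primitive.

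The only thing to be careful about is that no statement of the form ``$\iota_2$ primitive'' is needed in part (2); injectivity of $\iota_2$ suffices there, which matches the hypotheses as stated. This asymmetry between the two parts is the reason we do not get a corresponding statement that $\iota_1$ primitive implies $\iota_2$ primitive: the argument above never uses a quotient on the $K$ side.
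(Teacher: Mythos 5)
Your proof is correct, and it takes a slightly different route from the paper's. The paper identifies each lattice with its image and argues at the level of quotient groups: for part (1) it invokes the third isomorphism theorem $M/K\cong (M/L)/(K/L)$ together with the fact that an extension of a free abelian group by a free abelian group is free, and for part (2) it uses that $K/L$ is a subgroup of the free abelian group $M/L$ and that subgroups of free abelian groups are free. You instead work with the equivalent characterization of primitivity as torsion-freeness of the quotient and run an element-by-element chase. The two arguments are essentially the same fact viewed at different levels of abstraction; yours is more self-contained (no appeal to the structure theory of free abelian groups beyond ``finitely generated and torsion-free implies free''), while the paper's is shorter on the page. The one point you leave implicit is that ``torsion-free'' and ``free'' agree here because all the quotients are finitely generated (being quotients of the finite-rank free group $M$) --- worth a clause, since for general abelian groups torsion-free does not imply free. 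Your closing observation about the asymmetry of the two parts is accurate and matches how the lemma is applied in the paper: part (2) genuinely needs only injectivity of $\iota_2$, which is why the specialization maps in Section 2.2 only need to be primitive on one side of the diagram.
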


\begin{proof}
This lemma follows essentially from the isomorphism theorems of abelian groups. In this proof, since each map is an embedding, we will identify each lattice with its image under the embedding for the sake of simplicity. Under the assumptions of the first statement, $K/L$ is a free abelian group, and $M/K$ is a free abelian group. Therefore, since $M/K\cong (M/L)\big /(K/L)$, we see that $M/L$ must also be free. For the second part, notice if $\iota_1$ is a primitive embedding, then $M/L$ is a free abelian group, and $K/L$ is a subgroup, and so also free. Hence, $\iota_3$ is primitive. 
\end{proof}

As was mentioned prior to Lemma~\ref{lem:embed}, the main tool for showing that $L_\cB= S(\sigma_n)$ is to find a primitive embedding from $L_\cB$ into some other lattice, which we know is primitively embedded into $\Pic(X)$. From the first part of the lemma this ensures that $L_\cB$ is primitively embedded into $\Pic(X)$. Since we know $S(\sigma_n)$ is primitively embedded into $\Pic(X)$, the second part of the lemma then ensures that $L_\cB$ is primitively embedded into $S(\sigma_n)$, and therefore they must be equal,i.e. $L_\cB=S(\sigma_n)$.  

For the three cases $n=4,8,12$, the challenge is to find a primitive embedding into some lattice that we know is primitively embedded into the Picard lattice for each possible choice of $(W,G)$. In order to simplify the task, 
we reduce the number of pairs $(W,G)$ for which we need to to this. We describe this reduction in the next sections.  

\subsection{Isomorphisms of K3 surfaces} 

Our first reduction 
is to make equivalence classes of the surfaces $X_{W,G}$ based on certain isomorphisms, which preserve $S(\sigma_n)$. 
To do this, we use a theorem proven independently by Kelly \cite{kelly} and Shoemaker \cite{shoemaker} to find isomorphism classes within our desired list of K3 surfaces and to show that these isomorphisms preserve the non-symplectic automorphism (i.e. the explicit form of $\sigma_n$). 

\begin{theorem}[Kelly \cite{kelly}, Shoemaker \cite{shoemaker}]\label{t:kelly}
If $W$ and $W'$ are invertible polynomials and $J_{W^T} \leq G^T \leq SL_{W^T}$, $J_{(W')^T} \leq (G')^T \leq SL_{(W')^T}$ with $G^T=(G')^T$, then $Z_{W,G}$ and $Z_{W',G'}$ are birationally equivalent. 
\end{theorem}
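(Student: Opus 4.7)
The plan is to unpack the hypothesis $G^T = (G')^T$ as a linear-algebraic condition on the exponent matrices, and then to translate it into a geometric birational map. Using the Kreuzer--Krawitz description recalled in Section~\ref{sec:BHK} --- namely that $G_W^{\max}$ is generated by the columns of $A_W^{-1}$ --- the equality $G^T = (G')^T$ becomes an explicit compatibility between $A_W$, $A_{W'}$ and the chosen generators of $G$, $G'$. A first step is to write this compatibility out in coordinates and see what the shared ``dual data'' forces on the two pairs $(W,G)$ and $(W',G')$; in particular, applying $(\,\cdot\,)^T$ again via Proposition~\ref{prop:G^T}(1) recovers $G$ and $G'$ from the same object, so the ``difference'' between the two pairs lives entirely in how a common group is embedded in two \emph{a priori} different maximal tori.

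The second step is to interpret both sides toric-geometrically. Each pair $(W,G)$ determines a toric ambient variety $Y_G$, realized as a GIT quotient of affine space by a torus whose character lattice is dual to $G$, and inside $Y_G$ the variety $Z_{W,G}$ appears as an anticanonical Calabi--Yau hypersurface cut out by the descent of $W$. The hypothesis $G^T = (G')^T$ is then saying that $Z_{W,G}$ and $Z_{W',G'}$ share the same Batyrev-style mirror toric ambient, and I would try to leverage this to place both inside a common toric variety, exhibiting a birational map between them as two members of the same Calabi--Yau linear system.

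The more hands-on approach I would pursue first is via the atomic-type decomposition. By the Kreuzer--Skarke classification recalled before Proposition on the BHK construction, both $W$ and $W'$ split as disjoint sums of Fermat, loop, and chain atoms. I would induct on the number of atoms where the two polynomials disagree, reducing to the case of a single local atomic swap (e.g.\ replacing a Fermat-plus-chain piece by a pure chain, or a chain by a loop). Each such swap is governed by a well-known change of variables on the corresponding weighted affine chart; the hypothesis $G^T = (G')^T$ is then invoked to certify that this change of variables descends to the quotients by $\widetilde{G}$ and $\widetilde{G}'$, yielding the sought-after birational equivalence.

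The main obstacle, I expect, will be the equivariance check at each atomic swap: the two groups $\widetilde{G}$ and $\widetilde{G}'$ sit inside \emph{different} maximal tori of diagonal symmetries, so compatibility of the variable substitutions with the group actions is not automatic. Carefully tracking the character lattices of these tori through each atomic replacement, and identifying their common quotient via the hypothesis on $G^T$, is where the real work lies. A complete argument will likely require a case analysis across all pairs of atomic types that can arise in the 95 weight systems, and it is the uniform packaging of this bookkeeping --- rather than any single computation --- that I would expect to be the subtlest part of the proof.
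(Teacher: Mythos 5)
There is a genuine gap: none of the three routes you sketch supplies the mechanism that actually makes the theorem true, namely the existence of a \emph{common birational model} for $Z_{W,G}$ and $Z_{W',G'}$ realized as a quotient of a single large Fermat hypersurface. The argument the paper relies on (Kelly's, summarized in the proof of Lemma~\ref{lem:equiv}) uses Shioda maps: setting $B=dA_W^{-1}$, one obtains a dominant, generically finite rational map $\phi_B:Z_{dI}\dashrightarrow Z_W$ from the degree-$d$ Fermat hypersurface, given by monomials whose exponents are the entries of $B$; under this map the quotient $Z_{W,G}$ becomes birational to $Z_{dd'I}/H$ for a group $H$ computed from the dual data, and since $G^T=(G')^T$ the \emph{same} $H$ arises for $(W',G')$, so both quotients are birational to the same $Z_{dd'I}/H$. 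Your toric step cannot substitute for this: two distinct anticanonical members of a common toric ambient, or two members of the same linear system, are in general not birational to one another --- for surfaces of Kodaira dimension zero birational means isomorphic, and two general quartics in $\PP^3$ are not isomorphic --- so ``sharing a Batyrev mirror ambient'' by itself yields nothing.

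The atomic-swap induction fails for a more concrete reason: there is in general no change of variables, local or global, carrying $Z_W$ to $Z_{W'}$. Already in the paper's first application (lines 11 and 12 of Table~\ref{tab:order4}), $W=x^2+y^4+z^8+w^8$ lives in $\PP(4,2,1,1)$ with $\widetilde G\cong\ZZ/2\ZZ$ while $W'=x^2z+y^4+z^4+w^8$ lives in $\PP(3,2,2,1)$ with $\widetilde G'$ trivial; the hypersurfaces $Z_W$ and $Z_{W'}$ themselves are \emph{not} birational (they are covers of different degrees of the common quotient), and only the quotients $Z_{W,G}$ and $Z_{W',G'}$ are. A substitution realizing an atomic swap such as $x^2\rightsquigarrow x^2z$ necessarily involves fractional exponents, i.e.\ is multivalued, and becomes an honest morphism only after passing to the common Fermat cover --- which is exactly the Shioda-map construction your plan is missing. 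If you want to complete the argument, the step to add is: pull both group actions back to $Z_{dd'I}$ via $\phi_B$ and $\phi_{B'}$, check that the two resulting subgroups of the Fermat symmetry group coincide precisely because $G^T=(G')^T$, and conclude that the two quotients are birational because they are both birational to the same $Z_{dd'I}/H$.
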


In other words, if $(W,G)$ and $(W',G')$ are two different pairs (even from different weight systems), but they are both dual under BHK mirror construction to pairs satisfying $G^T=(G')^T$, then the varieties $Z_{W,G}$ and $Z_{W',G'}$ are birational, and so by extension, the K3 surfaces $X_{W,G}$ and $X_{W',G'}$ are birational. 

Kelly proved Theorem~\ref{t:kelly} using Shioda maps, and provided an explicit birational map. We go into more specifics about his arguments in our proof of Lemma \ref{lem:equiv} below.

Let $X$ and $X'$ be two of the K3 surfaces with non-symplectic automorphism we have introduced, and $\phi:X\to X'$ be Kelly's birational map. Since our K3 surfaces have the additional structure of a purely non-symplectic automorphism, we need to verify that the birational map of Kelly preserves the automorphism $\sigma_n$. In this discussion, we will denote the automorphism of order $n$ on $X$ and the automorphism of order $n$ on $X'$ both by $\sigma_n$. 
The condition we want then is for all $x\in X$, we have $\phi(\sigma_n \cdot x)=\sigma_n\cdot\phi(x)$. A map that preserves $\sigma_n$ in this way will be called {\em $\sigma_n$-equivariant}.

\begin{lemma}\label{lem:equiv}
The birational equivalence given in Theorem \ref{t:kelly} is $\sigma_n$-equivariant.
\end{lemma}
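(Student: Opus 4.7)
The plan is to trace through Kelly's explicit construction of the birational map and verify that, thanks to the very special form $W = x_0^n + g(x_1,x_2,x_3)$, the resulting map commutes with $\sigma_n$. Recall that Kelly's equivalence is built from Shioda maps: for an invertible polynomial $W$ with exponent matrix $A_W$, one has a rational map between $Z_W$ and a Fermat hypersurface $F_d$ given by monomial substitutions whose exponents are encoded in the entries of $A_W$ and $A_W^{-1}$. Under the hypothesis $G^T = (G')^T$, the tori used in quotienting the Fermat on each side match up, and the composite $\psi_{W'} \circ \psi_W^{-1}$ descends to a birational map $\phi: Z_{W,G} \dashrightarrow Z_{W',G'}$.

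The first step is to exploit the block decomposition of the exponent matrices. Because $x_0^n$ is segregated from $g(x_1,x_2,x_3)$,
$$A_W = \begin{pmatrix} n & 0 \\ 0 & A_g \end{pmatrix}, \qquad A_{W'} = \begin{pmatrix} n & 0 \\ 0 & A_{g'} \end{pmatrix},$$
and both inverses inherit the same block form with a common top-left entry $1/n$. Since the monomial substitutions making up the Shioda maps are built coordinate by coordinate from these matrices, the $x_0$ variable is handled entirely independently of $x_1, x_2, x_3$, with identical exponent data on both sides. It follows that after composing, the birational map $\phi$ has the form
$$\phi(x_0, x_1, x_2, x_3) = \bigl(\alpha\, x_0,\ h_1(x_1,x_2,x_3),\ h_2(x_1,x_2,x_3),\ h_3(x_1,x_2,x_3)\bigr)$$
for some scalar $\alpha \in \CC^*$ (arising from a choice of common $n$-th root, absorbable into the weighted projective equivalence) and rational functions $h_1, h_2, h_3$ depending only on $x_1, x_2, x_3$.

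Once $\phi$ is in this normal form, $\sigma_n$-equivariance is immediate. Since $\sigma_n$ scales only $x_0$ by $\zeta_n$ and fixes the other coordinates, a direct computation gives
$$\phi(\sigma_n \cdot (x_0, x_1, x_2, x_3)) = (\alpha \zeta_n x_0, h_1, h_2, h_3) = \sigma_n \cdot \phi(x_0, x_1, x_2, x_3)$$
on the dense open set where $\phi$ is defined. This identity extends across the resolutions of singularities since $\sigma_n$ lifts uniquely and canonically to the minimal resolutions $X$ and $X'$, and a birational map of smooth surfaces is determined by its restriction to any dense open subset.

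The main obstacle is verifying the claim underlying the normal form: one has to check carefully that the block structure of $A_W$ really propagates through the torus quotients appearing in the Shioda construction, so that no mixing between $x_0$ and the other variables is introduced when descending from the Fermat cover. This amounts to showing that the subgroup of the quotient torus acting on the $x_0$ factor is controlled solely by the top-left $1 \times 1$ block $(n)$, which is common to $A_W$ and $A_{W'}$. With this bookkeeping in place, the remainder of the argument is formal.
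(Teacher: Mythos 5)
Your overall strategy is the same as the paper's: exploit Kelly's Shioda-map construction together with the block-diagonal form of $A_W$ forced by $W=x_0^n+g(x_1,x_2,x_3)$, so that the $x_0$-coordinate is never mixed with the remaining variables. The difference is in execution. You attempt to put the composite birational map into the coordinate normal form $(x_0,x_1,x_2,x_3)\mapsto(\alpha x_0,h_1,h_2,h_3)$ and then check equivariance directly. The paper never writes down the composite at all: it observes that, under the hypothesis $G^T=(G')^T$, both $Z_{W,G}$ and $Z_{W',G'}$ are birational to a single quotient $Z_{dd'I}/H$ of a common Fermat hypersurface, and that the automorphism $\sigma_{dd'}(x_0:\cdots:x_n)=(e^{2\pi i/(dd')}x_0:\cdots:x_n)$ of $Z_{dd'I}$ descends to $Z_{dd'I}/H$ and induces the order-$n$ automorphism $\sigma_n$ on both sides. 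Since the two copies of $\sigma_n$ come from one and the same automorphism upstairs, the birational equivalence intertwines them with no coordinate computation.

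The weak point of your write-up is exactly the step you flag as ``the main obstacle'' and then do not carry out: verifying that the block structure survives the torus quotients, i.e.\ that resolving the multivalued inverse of a Shioda map introduces no mixing between $x_0$ and $x_1,x_2,x_3$, and that the scalar $\alpha$ is genuinely a single constant rather than a branch-dependent root of unity. As written, the normal form is asserted rather than proved, so the argument is incomplete at its crux. The clean way to close this gap is precisely the paper's device: work on the common Fermat cover, where the relevant automorphism visibly rescales only $y_0$ and commutes with the diagonal group $H$, hence descends canonically to both quotients; this replaces the quotient bookkeeping by a short descent argument. Your final remark, that equivariance on a dense open set extends across the minimal resolutions because a birational map of K3 surfaces is an isomorphism, is correct and is implicitly used in the paper as well.
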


\begin{proof}
The proof of Theorem \ref{t:kelly} uses what are called Shioda maps, which are rational maps. We introduce the following notation. Let W be an invertible polynomial with exponent matrix $A_W$, and let $B = dA_W^{-1}$, where $d$ is any integer such that $B$ has integer entries. Further, let $X_{dI}$ be the hypersurface in $\PP^n$ cut out by the Fermat polynomial $x_0^d + ... + x_n^d$. Then, the Shioda map $\phi_B: Z_{dI} \to Z_W$ is given by $\phi_B(y_0:...:y_n) = (x_0:...:x_n)$, where $x_j = \prod_{k=0}^n y_k^{b_{jk}}$. We denote by $\phi_{B^T}$ the Shioda map for $W^T$. Kelly proves that these Shioda maps push forward any action by a symmetry group $G$ via $\phi_{B_*}(g) = Bg$, and similarly for $W^T$. 

Now, Kelly's proof shows that if $G=G'$, then both $Z_{W^T,G^T}$ and $Z_{W'^T,G'^T}$ are birational to $Z_{dd'I}/H$, for some group $H$. 
Consider the automorphism $\sigma_{dd'}$ of $Z_{dd'I}$ given by 
\[
\sigma_{dd'}(x_0:\dots:x_n)=(e^{2\pi i/(dd')}x_0:x_1:\dots:x_n),
\]
which descends to an action on $Z_{dd'I}/H$. Notice that $\sigma_{dd'}$ has order $n$ in $Z_{dd'I}/H$, and this translates to the ation of $\sigma_n$ on both $Z_{W,G}$ and $Z_{W^T,G^T}$. \end{proof}

Thus in the situtation of Theorem~\ref{t:kelly}, we also obtain a birational map $X\to X'$ which is $\sigma_n$-equivariant. 

The following theorem is a standard result in the minimal model program in algebraic geometry and the classification of compact complex surfaces in algebraic geometry. A proof can be found in \cite{surfaces}. 

\begin{theorem}\label{t:iso}
Every birational map between K3 surfaces is an isomorphism.
\end{theorem}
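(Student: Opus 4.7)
The plan is to combine the strong factorization theorem for birational maps of smooth projective surfaces with the triviality of the canonical bundle of a K3 surface.

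Given a birational map $\phi$ between two K3 surfaces $X$ and $Y$, Zariski's strong factorization theorem produces a smooth projective surface $Z$ together with birational morphisms $p:Z\to X$ and $q:Z\to Y$, each a composition of point blowups, satisfying $q=\phi\circ p$ as rational maps. I would choose such a $Z$ minimal in the following sense: whenever some $(-1)$-curve on $Z$ is contracted by both $p$ and $q$, contract it via Castelnuovo's criterion and pass to a smaller resolution. Since each such contraction strictly decreases $b_2(Z)$, the process terminates at a minimal resolution in which no $(-1)$-curve is simultaneously contracted by $p$ and $q$.

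Since $p$ is a composition of blowups, one has $K_Z\sim p^*K_X+D_p$, where $D_p=\sum a_i E_i$ is a positive integral combination of the irreducible curves contracted by $p$, and symmetrically $K_Z\sim q^*K_Y+D_q$ with $D_q$ supported on the exceptional locus of $q$. Because $X$ and $Y$ are K3 surfaces, their canonical classes are trivial, so
\[
K_Z\sim D_p\sim D_q,
\]
and the birational invariance of the geometric genus gives $h^0(Z,K_Z)=h^0(X,K_X)=1$. Consequently the complete linear system $|K_Z|$ contains a unique effective divisor, forcing $D_p=D_q$; in particular, every irreducible curve contracted by $p$ is also contracted by $q$, and vice versa.

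If $p$ were not an isomorphism, the final blowup in its factorization would exhibit a $(-1)$-curve $E\subset Z$ contracted by $p$, and by the equality $D_p=D_q$ just established, $E$ would also be contracted by $q$, contradicting the minimality of $Z$. Hence $p$ is an isomorphism, and by symmetry so is $q$, yielding $\phi=q\circ p^{-1}$ as an isomorphism. The principal obstacle is the pluricanonical comparison in the previous paragraph: it is what forces the exceptional loci of $p$ and $q$ to coincide as divisors, and it is the step that genuinely uses the K3 hypothesis rather than merely the minimality of the surfaces $X$ and $Y$ in isolation.
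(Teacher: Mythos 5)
Your argument is correct. There is, however, no internal proof in the paper to compare it against: Theorem~\ref{t:iso} is recorded as a standard fact with a pointer to \cite{surfaces}, so your write-up supplies a proof where the paper only cites one. Your route is the classical elimination-of-indeterminacy argument, specialized to K3 surfaces: take a common resolution $Z$ on which no $(-1)$-curve is contracted by both $p$ and $q$, write $K_Z\sim p^*K_X+D_p\sim q^*K_Y+D_q$ with $D_p,D_q$ effective and supported exactly on the respective exceptional loci, and use $K_X\sim K_Y\sim 0$ together with $p_g(Z)=p_g(X)=1$ to conclude $D_p=D_q$, so the exceptional loci coincide and minimality forces both to be empty. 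The textbook proof of the more general statement (birational minimal surfaces of non-negative Kodaira dimension are isomorphic) argues slightly differently at the key step: if a $(-1)$-curve $E$ is contracted by $p$ but not by $q$, adjunction gives $-1=K_Z\cdot E=q^*K_Y\cdot E+D_q\cdot E\geq 0$ because $K_Y$ is nef and $E$ is not a component of the effective divisor $D_q$. Your version trades nefness for the uniqueness of the effective canonical divisor, which is special to $p_g=1$ and trivial canonical class; it is cleaner for K3 surfaces but would not extend verbatim to, say, Enriques surfaces. The one ingredient you use implicitly and should acknowledge is the standard fact that a birational morphism of smooth surfaces contracting a $(-1)$-curve factors through its blowdown (a consequence of Castelnuovo's criterion and the factorization theorem); this is what legitimizes your minimality reduction and the final contradiction.
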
 

To summarize, if we have two pairs $(W,G)$ and $(W',G')$ such that $W$ and $W'$ have the same weight system, and $G^T=(G')^T$, then the corresponding K3 surfaces $X$ and $X'$ are isomoprhic, and the isomorphism preserves the action of $\sigma_n$. Thus we have reduced to finding a method for computing $r_X$ and $q_X$ for just one representative in each isomorphism class. We will further reduce the problem using certain deformations of our K3 surfaces.

\subsection{Deformations of K3 surfaces}\label{ss:deformations}
In this section we describe a second reduction involving deformations that preserve the invariant lattice. 
We begin with two pairs $(W,G)$ and $(W',G')$ such that $W=x_0^n+f(x_1,x_2,x_3)$ and $W'=x_0^n+f'(x_1,x_2,x_3)$ with $W$ and $W'$ invertible polynomials with respect to the same weight system system and with $G=G'$. We can then form a family of hypersurfaces $\cZ\subseteq W\PP^n(q_0,\ldots,q_3)\times\AA^1$ defined by 
\[
x_0^n+tf(x_1,x_2,x_3)+(1-t)f'(x_1,x_2,x_3)
\] 
for $t \in \AA^1$. On an open subset $U\subseteq\AA^1$, these hypersurfaces will be quasismooth (see \cite{CP} for more discussion on this condition). Furthermore, $\tilde G$ acts on this family, by acting in each fiber. We can then form the quotient $\cZ/\tilde G$. On $U$, the singularities of each fiber will be in the coordinate curves, giving us sections $U\to \cZ$, which we can blow up to resolve the singularities in each fiber. 

This yields a family $\cX\to U$ of K3 surfaces, such that the fiber $X_0$ over $0$ is $X_{W,G}$, and the fiber $X_1$ over 1 is the surface $X_{W',G'}$. Even more, in each fiber of this family, we can we can define the set $\cB'_t$ for $t\in U$ generated by the set $\cE$ of exceptional curves and the set $\cK$ of smooth coordinate curves of the blowup, as in Section~\ref{s:MainResult}.\footnote{We use $\cB'=\cE\cup \cB$ instead of $\cB$ here for simplicity. Clearly $L_\cB\subset L_{\cB'}$ is a primitive embedding. Lemma~\ref{lem:embed} will again yield the desired primitive embedding for $L_\cB$.}
Let $L_{\cB_t'}$ be the lattice generated by the curves over $t\in U$.

We aim to show that if $L_{\cB'_0}\subseteq\Pic(X_0)$ is primitively embedded for the fiber $X_0$, then the same is true for $L_{\cB'_1}\subseteq\Pic(X_1)$ in the fiber $X_1=X_{W',G'}$. 

Let us denote the general fiber of the family $\cX\to U$ by $\eta$, and the geometric fiber corresponding to $\eta$ by $X_{\bar\eta}$. By Proposition~2.10 and Remark~2.13 in \cite{huybrechts}, we get a specialization map 
\[
\spec:\Pic(X_{\bar \eta})\to \Pic(X_0)
\]
which is a primitive embedding. The same is true replacing $X_0$ by $X_1$. 

By construction, we have the following commutative diagram:
\begin{center}
\begin{tikzpicture}
  \matrix (m)
    [ matrix of math nodes,
      row sep    = 3em,
      column sep = 4em
    ]
    {
       L_{\cB_0}   &  \Pic(X_0) \\
     L_{\cB_{\bar\eta}}  &  \Pic(X_{\bar\eta})       \\
    };
  \path
    (m-1-1) edge [->] node [left] {$\cong$} (m-2-1)
     (m-2-2) edge [{Hooks[right,length=0.8ex]}->] node [right] {$\spec$}(m-1-2)
    (m-1-1.east |- m-1-2)
      edge [{Hooks[right,length=0.8ex]}->] (m-1-2)
    (m-2-1.east) edge [{Hooks[right,length=0.8ex]}->] (m-2-2);
\end{tikzpicture}
\end{center}
and a similar diagram replacing $X_0$ by $X_1$. 

By the second part of Lemma~\ref{lem:embed}, assuming we know that $L_{\cB_0}\hookrightarrow \Pic(X_0)$ is a primitive embedding, we have $L_{\cB_\eta}\hookrightarrow\Pic(X_{\bar\eta})$ is a primitive embedding. Considering the similar diagram for $X_1$, since $\spec:\Pic(X_{\bar\eta})\to\Pic(X_1)$ is a primitive embedding, and $L_{\cB_\eta}\hookrightarrow\Pic(X_{\bar\eta})$ is a primitive embedding, we have that  $L_{\cB_1}\hookrightarrow\Pic(X_{1})$ is a primitive embedding. 

In this way, we again reduce the number of pairs $(W,G)$ for which we must check $L_\cB=S(\sigma_n)$, since any two pairs $(W,G)$ and $(W',G')$ satisfying the conditions outlined above will yield two K3 surfaces $X$ and $X'$ with the same invariant lattice, as soon as we know that $L_\cB$ is primitively embedded into the Picard lattice for one of them.
Let us illustrate this process with the following example. 

\begin{example}

Suppose we look at the two polynomials
\[
W=x^4+y^4+z^3w+zw^3, \quad W'=x^4+y^4+z^3w+w^4
\]
and define the family of quartics in $\PP^3$ defined by the equation
\[
x^4+y^4+z^3w+tzw^3+(1-t)w^4.
\]     
Here $x,y,z,w$ are coordinates on $\PP^3$ and $t$ is the coordinate on the base $\AA^1$. 
We want the quotient $\cZ$ of these curves by the group $\tilde G\cong\ZZ/2\ZZ$, where the action is 
\[
(x,y,z,w,t)\to (-x,-y,z,w,t).
\]
Each fiber in the quotient has singularities at the points defined by $x=y=0$ and points defined by $z=w=0$. There are eight such points in each fiber, giving us eight sections $U\to \cZ$.
We blow them up to obtain a family $\cX\to U$, and a configuration of curves in each fiber that looks like in Figure \ref{fig:resIII}.

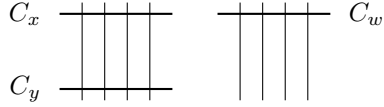
\begin{figure}[ht]
	\centering\begin{tikzpicture}[xscale=.6,yscale=.5]
	
	\draw [thick](-1.5,3)--(1,3);
	\node [left] at (-1.7,3){$C_x$};
	\draw [thick] (-1.5,1)--(1,1);
	\node [left] at (-1.7,1){$C_y$};
	\draw [thick] (2,3)--(4.5,3);
	\node [right] at (4.7,3){$C_w$};
	\draw (2.5,3.3)--(2.5,.7);
	\draw (3,3.3)--(3,.7);
	\draw (3.5,3.3)--(3.5,.7);
	\draw (4,3.3)--(4,.7);
	
	\draw (-1,3.3)--(-1,.7);
	\draw (-.5,3.3)--(-.5,.7);
	\draw (0,3.3)--(0,.7);
	\draw (.5,3.3)--(.5,.7);
	\end{tikzpicture}
	\caption{Resolution of singularities for each fiber}
	\label{fig:resIII}
\end{figure}

Here $C_x$, $C_y$ and $C_w$ are the strict transforms of the curves defined by $x=0$, $y=0$ and $z=0$, in each fiber, resp. On each fiber over $U$, the curves $C_x$ and $C_y$ are smooth of genus one, but the curve $C_w$ consists of four rational curves meeting at a point. We have not depicted the interstection points between these coordinate curves. In particular, $C_w$ intersects one of the exceptional curves connecting $C_x$ and $C_y$. 

Every fiber of this family has the configuration of curves of Figure \ref{fig:resIII}. Furthermore, the general fiber is a K3 surface with this configuration of curves. Let $L_{\cB'_t}$ be the lattice generated by this configuration in the fiber $X_t$. 

Suppose that we know $L_{\cB_0}$ is primitively embedded into $\Pic(X_0)$ in the fiber over $t=0$ (see the next section for verification of this fact). By the discussion above this is true also for the configuration of curves in the fiber over $t=1$.
\end{example}

In summary, using isomorphisms and deformations, we can group the K3 surfaces $X_{W,G}$ into eqivalence classes, where we know that each K3 surface in each equivalence class has the same invariant lattice. Thus in order to prove Theorem~\ref{t:main}, we need only compute $S(\sigma_n)$ for one representative $X$ in each class, and then compare the invariants with the equivalence class containing $X^T$.

\section{Proof of main theorem}\label{s:proofMain}

In the preceding section we have described the methods for grouping K3 surfaces $X_{W,G}$  into equivalence classes based on isomorphism and deformation and we have shown that every K3 surface in a given equivalence class has the same invariant lattice $S(\sigma_n)$. 
The proof of Theorem~\ref{t:main} now relies on our being able to verify that $L_\cB=S(\sigma_n)$ in each case. 
We summarize the process and give more  specific details in Section \ref{computations}, outlining the computational part of the proof.

The process is as follows: 
\begin{enumerate}
    \item For $n=4,8,12$, we list all pairs $(W,G)$, where $W=x_0^n+g(x_1,x_2,x_3)$ is invertible and $J_W\leq G\leq SL_W$. For some polynomials, there are several choices of $G$, each giving rise to a different K3 surface.  This is shown in Tables \ref{tab:order4}, \ref{tab:order8} and \ref{tab:order12} in Appendix \ref{app:tables}. 
    
    \item We group K3 surfaces according to the rank $r_X$ of their invariant lattice $S(\sigma_n)$. 
    These ranks are computed using Lemma~\ref{l:rank}. 
    
    \item In a given rank, we find isomorphism classes; in other words, we check whether $G^T=(G')^T$ for any two pairs $X_{W,G}$ and $X_{W'G'}$, and group these together. 
    
    \item In a given rank, we look for K3 surfaces that have the same weights and group, and find deformations, as described in Section~\ref{ss:deformations} and group these together. 
    
    Together with the previous step, this grouping defines isomorphism/deformation classes of K3 surfaces, such that each member in a given class has the same invariant lattice.   
    
    \item\label{step:rep} In each isomorphism/deformation class, choose an approprate representative and check that $L_\cB=S(\sigma_n)$ for that representative (recall the definition of $L_{\cB}$ in Section~\ref{s:MainResult}). We will indicate in the Tables \ref{tab:order4}, \ref{tab:order8}, and \ref{tab:order12}, which representative we have chosen.  
    
    \item Using the explicit description $S(\sigma_n)=L_\cB$, we can compute $r_X$ and $q_X$ for each invariant lattice $S(\sigma_n)$. 
    
    \item For each pair $X, X^T$ we can then check that $S(\sigma_n)^\vee=S(\sigma_n^T)$ by comparing the ranks and discriminant quadratic forms, as described in Section~\ref{s:MainResult}. 
    
\end{enumerate}

Once this last step has been verified, the proof of Theorem~\ref{t:main} is complete. 

There are a handful of cases where the lattice $L_\cB$ admits no proper overlattices. 
Since $S(\sigma_n)$ is an overlattice of $L_\cB$, we obtain immediately that $L_\cB=S(\sigma_n)$. In particular this is true for most of the cases with $n=12$. We will make note of these specific cases later.

In step \ref{step:rep} the majority of equivalence classes have a representative such that $\tilde G$ is the trivial group. If this is the case we have a primitive embedding $L_\cB\subseteq\Pic(X)$ from a result of Belcastro \cite{belcastro}, as we will describe in the Remark below. 
If, on the other hand, there is no representative with trivial group in a given equivalence class, we call that class {exceptional} and the computation of the primitive embedding will be case specific.
There are three particular cases that require more work and we discuss these in Section~\ref{sec:exc}.

\subsubsection{Primitive Embeddings:}
\label{ss:belcastro}

Once we have the K3 surfaces sorted into classes, it suffices to show that $L_\cB$ is primitively embedded into $\Pic(X)$ for only one of the representatives in each class. In most cases there is representative of each class defined by a pair of the form $(W,J_W)$; i.e. $G=J_W$. For each of the 95 weight systems, Belcastro \cite{belcastro} computed the Picard lattice for a general member of the family of K3 surfaces defined by nondegenerate quasihomogeneous polynomials with respect to that weight system. Each invertible polynomial $W$ with the same weight system yields a special member of this family, but often with bigger Picard lattice. We can use the result of Belcastro to show that the embedding $L_\cB\hookrightarrow\Pic(X)$ is primitive. 

More specifically, given a K3 surface $X=X_{W,J_W}$, we let $\cB'=\cK\cup\cE$ denote the set of exceptional curves and smooth irreducible components of the coordinate curves as in Section~\ref{s:MainResult}. We can construct a family $\cX\to B$ of K3 surfaces defined by the nondegenerate polynomials that are quasihomogeneous with respect to the same weight system as $W$, where $B$ is some open subset of some affine space. Let $X_{\bar\eta}$ denote the generic (geometric) fiber.

Each fiber $X_t$ over $t\in B$ is the minimal resolution of a hypersurface $Z_t$ in weighted projective space defined by one of the nondegenerate quasihomogeneous polynomials mentioned previously. One can check via Yonemura's calculations in \cite{yonemura} that the singularities of $Z_W$ are also singularities of $Z_t$. Thus we have an injective map $L_{\cB'}\hookrightarrow\Pic(X_{\bar\eta})$ fitting into the following commutative diagram.:

\begin{center}
\begin{tikzpicture}
  \matrix (m)
    [ matrix of math nodes,
      row sep    = 3em,
      column sep = 4em
    ]
    {
      L_{\cB'}              &  \Pic(X_{W,G}) \\
      \Pic(X_{\bar\eta}) &             \\
    };
  \path
    (m-1-1) edge [->] (m-2-1)
    (m-1-1.east |- m-1-2)
      edge [{Hooks[right,length=0.8ex]}->] (m-1-2)
    (m-2-1.east) edge [{Hooks[right,length=0.8ex]}->] node [below] {$\spec$} (m-1-2);
\end{tikzpicture}
\end{center}

Recall $\spec:\Pic(X_{\bar\eta)}\to\Pic(X)$ is a primitive embedding. Using the explicit description of $L_{\cB'}$ and Belcastro's computation of $\Pic(X_{\bar\eta})$ in \cite{belcastro}, one can check that the map $L_{\cB'}\hookrightarrow\Pic(X_{\bar\eta})$ is an isomorphism (see \cite{bott}). By Lemma~\ref{lem:embed}, the lattice $L_{\cB'}$ is a primitive sublattice of $\Pic(X)$. Furthermore, as before, since $L_\cB$ is a primitive sublattice of $L_{\cB'}$, we have that $L_\cB$ is a primitive sublattice of $\Pic(X)$. Let us emphasize that we require the group $G=J_W$. 

In \cite{belcastro}, Belcastro listed the Picard lattices using a number assigned to each weight system by Yonemura (see \cite{yonemura}). In Tables \ref{tab:order4}, \ref{tab:order8}, and \ref{tab:order12} in the Appendix, we have indicated this number in the last column. 

In order to explain the entire process, we give an ilustrative example for $n=4$ and invariant lattice of rank 4.

\begin{example}\label{ex:rk4}
For $n=4$ and $r_X=4$ 
there are three pairs $(W,G)$ satisfying the conditions we have outlined in this work. 
These are shown in Table~\ref{Order 4, Rank 4 Example}---a small excerpt from Table~\ref{tab:order4} in Appendix \ref{app:tables}. We have starred line 12 in the table, to indicate that we will use this pair to compute $S(\sigma_4)$ for the entire equivalence class. 

\begin{table}[h!]\centering
\begin{tabular}{ l c c c c c c c}
  No & Rk & Dual & Weights & Polynomial & $G/J_W$ & Form&Belcastro's no\\
  \hline 
  11 & 4 & 77 & (4,2,1,1;8) & $x^2+y^4+z^8+w^8$ & $\ZZ/2\ZZ$ & $w_{2,2}^{1}+w_{2,2}^5$&\\
  \hline
  12$^*$ & 4 & 78 & (3,2,2,1;8) & $x^2z+y^4+z^4+w^8$ & trivial & $w_{2,2}^{1}+w_{2,2}^5$ &19\\
  \hline
  13 & 4 & 79 & (3,2,2,1;8) & $x^2z+y^4+z^4+xw^5$ & trivial & $w_{2,2}^{1}+w_{2,2}^5$ &\\
  \hline
   \end{tabular}
 \caption{Order 4, Rank 4 Example} \label{Order 4, Rank 4 Example}
 \end{table}

Let $(W_{11}, G_{11})$ denote the pair 
for line 11 and, likewise, $(W_{12},G_{12})$ for line 12 and $(W_{13},G_{13})$ for line 13.
As $G_{12}=J_{W_{12}}$, we can use the method described in Section~\ref{ss:belcastro} to first compute the invariant lattice for line 12. We will then show that the three surfaces belong to the same isomorphism/deformation class, so that the invariant lattice in lines 11 and 13 are the same as in line 12.

In $\PP(3,2,2,1)$, the hypersurface $Z_{W_{12}}$ 
has four $A_1$ singularities at the points satisfying $x=w=0$ and an $A_2$ singularity at the point satisfying $y=w=0$. Blowing up these five points one obtains the configuration of curves depicted in Figure \ref{fig:rank4config}. We have labelled the strict transform of the set $\{y=0\}$ as $C_y$, and similary for $C_w$. The curve $C_y$ has genus 2, and $C_w$ has genus 0. 

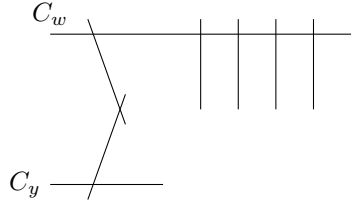
\begin{figure}[h!]
\centering
\begin{tikzpicture}

 \draw (0,0)--(1.5,0);
\node [above] at (0,2){$C_w$};
     
 \draw (0,2)--(4,2);
 \node [left] at (0,0){$C_y$};
 
 \draw (.5,-0.2)--(1,1.2);
 \draw (1,0.8)--(.5,2.2);
 
 \draw (2,2.2)--(2,1);
 \draw (2.5,2.2)--(2.5,1);
 \draw (3,2.2)--(3,1);
 \draw (3.5,2.2)--(3.5,1);
\end{tikzpicture}
\caption{Configuration of curves for  $W_{12}$}
\label{fig:rank4config}
\end{figure}

This configuration of curves has the incidence graph depicted in Figure \ref{fig:blow_up_12}. 
Let $R_1$ and $R_2$ denote the exceptional curves coming from the $A_2$ singularity and $R_3,\ldots, R_6$ the other four exceptional curves (coming from the $A_1$ singuarities). 

\begin{figure}[h]
\centering
\begin{tikzpicture}[xscale=.5,yscale=.4] 
 \begin{scope}[every node/.style={circle, draw, fill=black!50, inner sep=0pt, minimum width=4pt}]
   \node (n2) at (-2,0) {};
  \node (n3) at (-1,0) {};
  \node (n4) at (0,0) {};
  \node (n5) at (1,2) {};
  \node (n6) at (1,1) {};
  \node (n7) at (1,-1) {};
  \node (n8) at (1,-2) {};
\end{scope}

\node[{rectangle, draw, fill=black!50, inner sep=0pt, minimum width=4pt, minimum height=4pt}] (n1) at (-3,0) {};
\node[left] at (-3,0) {$C_y$};
\node[above] at (-0.3,0) {$C_w$}; 
 
  \foreach \from/\to in {n1/n2,n2/n3,n3/n4,n4/n5,n4/n6,n4/n7,n4/n8}
    \draw (\from) -- (\to);

\end{tikzpicture}
\caption{Graphs for the configuration of curves for $W_{12}$} \label{fig:blow_up_12}
\end{figure}
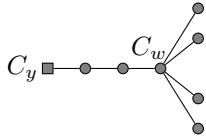

Define the set $\cB'=\{C_y, C_w,R_1,\dots, R_6\}$. If one considers the adjacency matrix for the graph in Figure \ref{fig:blow_up_12}, one can quickly see that $C_y$ is redundant, and that the curves in the configuration generate a lattice $L_{\cB'}$ with rank 7, and discriminant quadratic form $v\oplus w_{2,3}^1$, whose graph is shown in Figure~\ref{fig:Belcastro_ Pic_3221}. 

\begin{figure}[h]
\centering
\begin{tikzpicture}[xscale=.6,yscale=.5, thick, every node/.style={circle, draw, fill=black!50, inner sep=0pt, minimum width=4pt}]

  \node (n2) at (-2,0) {};
  \node (n3) at (-1,0) {};
  \node (n4) at (0,0) {};
  \node (n5) at (1,2) {};
  \node (n6) at (1,1) {};
  \node (n7) at (1,-1) {};
  \node (n8) at (1,-2) {};

  \foreach \from/\to in {n2/n3,n3/n4,n4/n5,n4/n6,n4/n7,n4/n8}
    \draw (\from) -- (\to);

\end{tikzpicture}
\caption{Generating set for $L_{\cB'}$.} \label{fig:Belcastro_ Pic_3221}
\end{figure}

In Belcastro's computations \cite{belcastro}, the weight system $(3,2,2,1;8)$ corresponds to number 19, where we see that the Picard lattice for a generic K3 surface from this weight system has rank 7 and discriminant quadratic form $v\oplus w_{2,3}^1$. Thus $L_{\cB'}$ is equal to this lattice, and by Lemma~\ref{lem:embed}, we see that $L_{\cB'}$ is primitively embedding into $\Pic(X_{12})$.  
Now we consider the action of $\sigma_4$. This acts on $X_{12}$ by permuting $R_3, R_4, R_5, R_6$, and leaving each of $R_1$ and $R_2$ invariant. 
Thus the lattice $L_\cB$ is generated by $\cB=\{\mathcal C, R_1, R_2, R_3+R_4+R_5+R_6\}$, and clearly embeds primitively into $L_{\cB'}$, and hence embeds primitively into $\Pic(X_{12})$. Hence $S(\sigma_4)=L_\cB$. 

From this explicit description, we can again use the adjacency matrix, whose graph is depicted in Figure~\ref{fig:exampleInvariantGraph}, to compute that $L_\cB$ has rank 4, and discriminant quadratic form $\omega_{2,2}^1\oplus\omega_{2,2}^5$. 
This is the lattice $S(\sigma_4)=\langle 4\rangle\oplus A_3$. 
\begin{figure}[h]
\centering
\begin{tikzpicture}[xscale=.6,yscale=.5] 
 \begin{scope}[every node/.style={circle, draw, fill=black!50, inner sep=0pt, minimum width=4pt}]
  \node (n2) at (-2,0) {};
  \node (n3) at (-1,0) {};
  \node (n4) at (0,0) {};
  \end{scope}
  \node[{rectangle, draw, fill=black!50, inner sep=0pt, minimum width=4pt, minimum height=4pt}] (n5) at (1,0) {};
  \node[above] at (0.5,0) {4};

  \foreach \from/\to in {n2/n3,n3/n4,n4/n5}
    \draw (\from) -- (\to);

\end{tikzpicture}
\caption{Generating set for $L_{\cB}$. The square node represents the four permuted exceptional curves, and has self-intersection -8.} \label{fig:exampleInvariantGraph}
\end{figure}
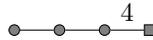

Now we have determined $S(\sigma_4)$ for $X_{12}$, we only need to show the other two representatives in this class have the same invariant lattice and this is done showing isomorphism/deformation. 
Observe that the dual polynomial for $W_{11}$ above is itself, while the dual polynomial for $W_{12}$ is $W_{12}^T=x^2+y^4+xz^4+w^8$. Both of these dual polynomials have the same weight system (4,2,1,1;8). 
From the definition of $G^T$, one can easily compute that the dual groups $G_{11}^T$ and $G_{12}^T$ for lines 11 and 12 are equal (e.g. as subgroups of $\GL_{4}(\CC)$). 
Theorem~\ref{t:iso}, $X_{11}$ and $X_{12}$ are isomorphic, and so their invariant lattices are the same.

Now observe that lines 12 and 13 have the same weight system and in both cases $G/J_W$ is the trivial group. 
We can create the a family of hypersurfaces in $\PP(3,2,2,1)$ via the polynomial $x^2z+y^4+z^4+tw^8+(1-t)xw^5$, with $t\in \AA^1$. On an open subest of $\AA^1$, each fiber of this family has the same singularities computed for $W_{12}$ and these five singularities in each fiber give us five sections of the family, which we can then blow up to obtain the same configuration of curves depicted in Figure~\ref{fig:rank4config} in each fiber, whose incidence graph is depicted in Figure~\ref{fig:blow_up_12}.
So by results above, lines 12 and 13 are deformations of one another that preserve the invariant lattice. 

In summary, lines 11 and 12 in Table~\ref{Order 4, Rank 4 Example} are connected via isomorphism, and lines 12 and 13 are connected by deformation, so all three have the same invariant lattice $S(\sigma_4)=\langle 4\rangle\oplus A_3$, with quadratic form $q=w^1_{2,2}+w^5_{2,2}$.
To emphasize the utility of these equivalence classes, notice that for line 11 we would not have been able to embed the lattice $L_\cB$ easily into some other lattice that we know.

From Table~\ref{tab:order4} in Appendix \ref{app:tables}, the BHK duals of lines 11, 12, and 13 are in lines 77, 78, and 79, resp. as shown in Table~\ref{Order 4, Rank 16 Example}. Again, we have starred line 79, since we use this example to compute $S(\sigma_n)$. 
\begin{table}[h!]\centering
\begin{tabular}{ l c c c c c c c}
  No & Rk & Dual & Weights & Polynomial & $G/J_W$ & Form & Belcastro's no\\
  \hline
 77 & 16 & 11 & (4,2,1,1;8) & $x^2+y^4+z^8+w^8$ & $\ZZ/4\ZZ$ & $w_{2,2}^{-1}+w_{2,2}^{-5}$& \\
 \hline
 78 & 16 & 12 & (4,2,1,1;8) & $x^2+y^4+xz^4+w^8$ & $\ZZ/4\ZZ$ & $w_{2,2}^{-1}+w_{2,2}^{-5}$ &\\
 \hline
 79* & 16 & 13 & (8,5,4,3;20) & $x^2z+y^4+z^5+xw^4$ & trivial & $w_{2,2}^{-1}+w_{2,2}^{-5}$&62 \\
 \hline 
   \end{tabular}
 \caption{Order 4, Rank 16 Example} \label{Order 4, Rank 16 Example}
 \end{table}
 
Via similar computations one sees that they belong to the same isomorphism/deformation class---lines 77 and 78 are isomorphic, and lines 78 and 79 are connected by deformation---and they have invariant lattice $S(\sigma_4)=U\oplus D_5\oplus D_9$ with form $q=w^{-1}_{2,2}+w^{-5}_{2,2}$.
This shows that the two mirror symmetry constructions agree for these K3 surfaces.

\end{example}

\subsection{Computations}\label{computations}
We list in tables in Appendix \ref{app:tables} all possibilities for an invertible polynomials $W$ of the form $W=x_0^n+g(x_1,x_2,x_3)$, $n=4$, $n=8$ and $n=12$---each case in a separate table---and the symmetry groups $G$, that generate the K3 surfaces $X_{W,G}$. 
In the table we make a change of variables from $x_0,\dots,x_3$ to $x,y,z,w$, to put the variables in order of descending weight. 

We have numbered the possible pairs $(W,G)$ to make referencing efficient. In each line of the table, we include the number of the pair, the rank of the invariant lattice, the number of its BHK dual according to our numbering, followed by the weight system, the polynomial, the group, and then the discriminant quadratic form of the invariant lattice. 

We have organized the numbering by rank, and have used double lines to separate the isomorphism/deformation classes in the table, the details of which are summarized afterward. Furthermore, we have indicated which representative in each class we have used to show that the embedding $L_\cB\subseteq \Pic(X_{W,G})$ is primitive with a $*$. 
The last column contains the number given to the weight system in \cite{belcastro} for comparison.

Finally, we have labelled with a superscript E those cases that are exceptional and we detail these computations in Section~\ref{sec:exc}. Some equivalence classes in the table have neither $*$ nor $E$; in these cases, the invariant lattice has no overlattices. This is also indicated in the last column.

We now describe how we obtain the isomorphism/deformation classes, using the methods described in Section~\ref{s:MainResult}. 
Theorem~\ref{t:main} can be verified by checking the rank and quadratic forms for each K3 surface and that of their BHK mirror.

\subsubsection{Order 4 Computations}\label{sec:order4}
Table~\ref{tab:order4} in Appendix \ref{app:tables} contains all possibilitites for an invertible polynomials $W$ of the form $W=x_0^4+g(x_1,x_2,x_3)$ together with the symmetry groups $G$.
We describe, according to $r_X$, the isomorphisms and deformations that group our K3 surfaces into equivalence classes. 
We reference each line in the table using the number we have assigned in the first column. 
\begin{itemize}
\item \textbf{Rank 1:} Lines 1--5 are all connected by deformations. The lattice $L_\cB$ for each of these K3 surfaces is $\langle 4\rangle$, which has no overlattices as described earlier (see also \cite[Method I]{CP}). So we obtain $S(\sigma_4)=\langle 4\rangle$ with discriminant quadratic form $w^1_{2,2}$. 

\item\textbf{Rank 2:} Lines 6--10 are related by deformation.

\item\textbf{Rank 4:} Example \ref{ex:rk4} shows that lines 11 and 12 are isomorphic and that lines 12 and 13 are related by deformation.

\item\textbf{Rank 5:} Lines 14--18 are related by deformation.

\item\textbf{Rank 6:} There are two different equivalence classes in this rank. 

Lines 19--21 are isomorphic and lines 21 and 22 are related via deformation and lines 19, 23, and 24 are related by deformation. Hence lines 19--24 all belong to the same class. 

Similarly, lines 25 and 26, 27 and 28, and 29 and 30 are pairwise isomorphic. Lines 25, 27, and 29 are related by deformation, thus lines 25--30 belong to the same class. 

\item\textbf{Rank 8:} Lines 31 and 32 are related by deformation. This is an exceptional case, and we will provide more details for line 31 in Section~\ref{ss:rank8}. 

\item\textbf{Rank 9:} Lines 33 and 34, and 35 and 36 are pairwise isomorphic. Furthermore, lines 34 and 36 are related by deformation. Thus lines 33--36 belong to the same class. 

\item\textbf{Rank 10:} For rank 10, there are 4 classes.

Line 37 is by itself, it is an exceptional case and will be studied in detail in Section \ref{ss:rank10}. 

Lines 38--40 are isomorphic.

Lines 41--43 are related by deformation. The K3 surface in line 42 has a non-symplectic automorphism $\sigma_2$ of order two generated by the set $\cB$ (see \cite{ABS}), hence $L_\cB=S(\sigma_2)=S(\sigma_4)$.

Lines 44 and 46--49 are isomorphic and 50--51 are isomorphic. Also, lines 45, 46, 52 and 53 are related by deformations. Thus lines 44--53 all belong to the same equivalence class. 

\item\textbf{Rank 11:} Lines 54 and 55, and 56 and 57 are pairwise isomorphic and lines 55 and 56 are related by deformation. So 54--57 belong to the same equivalence class.  

\item\textbf{Rank 12:} Lines 58 and 59 are isomorphic. As with line 42, the K3 surface in line 58 admits a non-symplectic automorphism $\sigma_2$ of order 2 whose invariant lattice $S(\sigma_2)$ is generated by the coordinate curves and exceptional curves (see \cite{ABS}). Hence $L_\cB$ embeds primitively into $S(\sigma_2)$, and $S(\sigma_2)$ embeds primitively into $\Pic(X)$. Thus by Lemma~\ref{lem:embed}, we know $L_\cB=S(\sigma_4)$.  

\item\textbf{Rank 14:} There are two equivalence classes with this rank. 

Lines 60--62 are isomorphic. Lines 60, 63 and 64 are related by deformations, as are 62 and 65. Thus 60--65 belong to the same class. This is an exceptional case, and we will work with the K3 surface in line 63 in Section \ref{ss:rank14}.

Lines 66--68 are isomorphic as are 69--71. Lines 66 and 69 are related by a deformation so that 66--71 belong to the same class. 

\item\textbf{Rank 15:} Lines 72--76 are isomorphic.

\item\textbf{Rank 16:} Lines 77 and 78 are related by deformation and lines 78 and 79 are isomorphic. Thus numbers 77--79 belong to the same class.

\item\textbf{Rank 18:} Lines 80--84 are isomorphic.

\item\textbf{Rank 19:} Lines 85--89 are isomorphic.
\end{itemize}

\subsubsection{Order 8 Computations}\label{sec:order8}

Table~\ref{tab:order8} in Appendix~\ref{app:tables} contains all possibilitites for an invertible polynomials $W$ of the form $W=x_0^8+g(x_1,x_2,x_3)$ together with the symmetry groups $G$.
Again in this section, organized by rank, we describe the isomorphisms and deformations that group our K3 surfaces into equivalence classes. 
As before, we reference each line in the table using the number we have assigned in the first column.

\begin{itemize}
\item\textbf{Rank 3:} Lines 1--4 and 6 are related by deformation and lines 4 and 5 are isomorphic. Thus lines 1--6 belong to the same equivalence class. 

\item\textbf{Rank 6:} Lines 7 and 8, 9 and 10, and 11 and 12 are pairwise isomorphic and lines 8, 10 and 12 are related by deformation. Thus lines 7--12 belong to the same equivalence class.

\item\textbf{Rank 7:} Lines 13 and 14 and lines 15 and 16 are pairwise isomorphic. Lines 14 and 16 are related by deformation. Thus lines 13--16 belong to the same equivalence class. 

\item\textbf{Rank 10:} Lines 17, 18 and 21 are related by deformation and lines 18--20 are isomorphic. Thus lines 17--21 belong to the same equivalence class. Moreover, the K3 surface in line 19 admits also a non-symplectic automorphism $\sigma_2$ of order 2 whose invariant lattice $S(\sigma_2)$ is generated by coordinate curves and exceptional curves. Hence $L_\cB$ is a primitive sublattice of $S(\sigma_2)$, and therefore a primitive sublattice of $\Pic(X)$ by Lemma~\ref{lem:embed}. Hence $L_\cB=S(\sigma_8)$.

\item\textbf{Rank 13:} Lines 22 and 23, and lines 24 and 25 are pairwise isomorphic and lines 23 and 25 are related by deformation. Thus numbers 22--25 belong to the same equivalence class. 

\item\textbf{Rank 14:} Lines 26--28, and lines 29--31 are isomorphic. Furthermore lines 27 and 29 are related by deformation. Thus lines 26--31 belong to the same equivalence class.  

\item\textbf{Rank 17:} Lines 32--36 are isomorphic and lines 32 and 37 are related by deformation. Thus lines 32--37 belong to the same equivalence class. 
\end{itemize}

\subsubsection{Order 12 Computations}\label{sec:order12}
Table~\ref{tab:order12} in Appendix \ref{app:tables} contains all possibilitites for an invertible polynomials $W$ of the form $W=x_0^{12}+g(x_1,x_2,x_3)$ together with the symmetry groups $G$.

Whenever the rank $r_X$ is equal to $2, 7, 8, 12, 13$, or $18$, the lattice $L_\cB$ has no overlattice. Thus in these cases, as mentioned before, we know that $L_\cB=S(\sigma_{12})$. Although unnecessary, one can also check that there is only one equivalence class for each of these ranks.

The only case remaining is when $r_X=10$. In Table~\ref{tab:order12}, lines 12, 13 and 15, and lines 16 and 17 are related by deformation, resp. and lines 14--16 are isomorphic. Thus lines 12--17 are all in the same equivalence class. 
This is an exceptional case. However, the K3 surface in line 13 admits a purely nonsymplectic automorphism $\sigma_4$ of order 4, which we have dealt with already (see line~45 in Table~\ref{tab:order4}). Furthermore, one can easily check that $S(\sigma_{12})=S(\sigma_4)=L_\cB$ as both lattices are generated by the coordinate curves and exceptional curves. 

Since there are no further details needed in any of these cases to show that $L_\cB=S(\sigma_{12})$, we have omitted the last column in Table~\ref{tab:order12}.

\section{Exceptional Calculations}\label{sec:exc}
According to what we have previously shown, the exceptional cases are the following ones:
\begin{enumerate}
    \item $n=4$, rank 14, lines 60--65. We will show invariant lattice for line 63 (see Section~\ref{ss:rank14}).
     \item $n=4$, rank 10, line 37. We will show its invariant lattice (see Section~\ref{ss:rank10}).
     \item $n=4$, rank 8, lines 31--32. We will show the invariant lattice for line 31 (see Section~\ref{ss:rank8}).
\end{enumerate}

\subsection{Rank 14}\label{ss:rank14}

We start with rank 14. For this rank, we use the pair $(W,G)$ with $W=x^4+y^4+z^4+w^4$ and the group $G/J_W\cong \ZZ_4$, which corresponds to line 63 in Table \ref{tab:order4}. There are actually six choices for such a group, but they are all the same up to permutation of the variables. A generator of $G/J_W$ is $\left(\tfrac{1}{4},\tfrac{3}{4},0,0\right )$.
After blowing up $A_{W,G}$ at the singular points, we get the configuration of curves in Figure \ref{fig:rank14exceptional}.
The curve $C_x$ and $C_y$ have genus zero and the other two have genus 1. Furthermore, $C_x$ intersects with each of $C_z$ and $C_w$ in a single point, not depicted here. The same is true for $C_y$. 

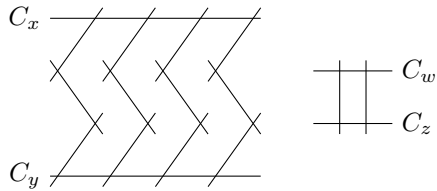
\begin{figure}
\begin{tikzpicture}[scale=0.7]
\draw (0,0) -- (4,0);
\node [left] at (0,0){$C_y$};
\draw (0,3) -- (4,3);
\node [left] at (0,3){$C_x$};
\draw (0,-0.2) -- (1,1.2);
\draw (1,0.8) -- (0,2.2);
\draw (0, 1.8) -- (1,3.2);

\draw (1,-0.2) -- (2,1.2);
\draw (2,0.8) -- (1,2.2);
\draw (1, 1.8) -- (2,3.2);

\draw (2,-0.2) -- (3,1.2);
\draw (3,0.8) -- (2,2.2);
\draw (2, 1.8) -- (3,3.2);

\draw (3,-0.2) -- (4,1.2);
\draw (4,0.8) -- (3,2.2);
\draw (3, 1.8) -- (4,3.2);
\draw (5,1) -- (6.5,1);
\node [right] at (6.5,1){$C_z$};
\draw (5,2) -- (6.5,2);
\node [right] at (6.5,2){$C_w$};
\draw (5.5,0.8) -- (5.5,2.2);
\draw (6,0.8) -- (6,2.2);
\end{tikzpicture}
\caption{Configuration of curves in the blow-up for $W=x^4+y^4+z^4+w^4$} \label{fig:rank14exceptional}
\end{figure}

The automorphism $\sigma_4=\left(\frac14,0,0,0\right)$ fixes the four $A_3$'s, but permutes the two $A_1$'s.
We call $\tau$ the involution $\tau=(\sigma_4)^2$. 
From Lemma~\ref{l:rank}, the invariant lattice $S(\tau)$ has rank 18 and is a 2-elementary lattice such that its discriminant group admits 4 generators.

Observe that the set $\cB$ consisting of coordinate curves and exceptional curves does not contain enough curves to generate a lattice with rank 18.  Instead, we need to add a few more curves to the generating set and we do this by considering the set of lines on the Fermat quartic in $\PP^3$. 
By \cite{SSvL}, \cite{degtyarev}, the Picard lattice of the Fermat quartic is generated by the 48 lines lying on the surface. 
Let $\omega=e^{2\pi i/8}$. 
For $0\leq j,k\leq 3$, following \cite{SSvL} let us define the following sets of lines, with 16 in each group, making up the 48 lines on the Fermat quartic:
\begin{align*}
\ell_1(j,k) &:=\set{[s,\omega i^js, t, \omega i^k t ]: [s,t]\in \PP^1}\\
\ell_2(j,k) &:=\set{[s,t,\omega i^js, \omega i^k t ]: [s,t]\in \PP^1}\\
\ell_3(j,k) &:=\set{[s, t, \omega i^k t ,\omega i^js]: [s,t]\in \PP^1}.
\end{align*}

In what follows, we only need the lines $\ell_1(j,k)$. Observe  that they are invariant under $\tau$, but they are permuted pairwise by $\sigma_4$.

Considering the group $G/J_W$, one has 
\begin{align*}
\left(\tfrac{1}{4},\tfrac{3}{4},0,0\right)\cdot [s,\omega i^js, t, \omega i^k t ] &= 
  [is,\omega i^{j+3}s, t, \omega i^k t ]\\
  &=[s',\omega i^{j+2}s', t, \omega i^k t ]
\end{align*}
where $s'$ is just a change of parametrization for $\PP^1$. 
Thus the action of $G/J_W$ has $8$ orbits among the set of lines $\ell_1(j,k)$ and we obtain 8 lines on the quotient $X_{W,G}$. 

We now need to know how these curves intersect in the configuration of curves given above. 
First observe that they intersect $x=0$ and $y=0$ transversally: we show this by considering the chart $w\neq 0$ in $\PP^1$. This leaves us with the equation $x^4+y^4+z^4+1=0$. The curve $x=0$ intersects the curve $y=0$ in four points. Each of these points also lies on exactly four of the lines $\ell_1(j,k)$, and no line contains two of these points. 

Let us first consider $\ell(1,1)$, and the rest of the intersections are very similar calculations. In this chart, the line $\ell_1(1,1)$ has the parametrization $(-i\omega^{-1}s,s, -i\omega^{-1})$, and this line intersects the curves $x=0$ and $y=0$ in the point $(0,0,\omega^{-1})$ (recall we are in the chart $w\neq 0$). Hence the tangent direction is $(-i\omega^{-1}, 1, 0)$. Now if we consider how the curve defined by $x=0$ on this surface (so in the $x=0$ plane we are considering the curve $y^4+z^4+1=0$), we see this has tangent direction $(0,4y^3,4z^3)$, which at the intersection point becomes $(0,0,4\omega^{-3})$. Hence these two curves are transversal. 

Without too much trouble, one can see that in the blowup (i.e. in the configuration of curves of Figure \ref{fig:rank14exceptional}) in fact the lines $\ell_1(j,k)$ intersect the middle curves on the $A_3$ singularities, and don't intersect any of coordinate curves $C_x,C_y,C_z,C_w$. Furthermore there are 8 lines, and each line connects exactly one pair of curves, one from the $A_3$ singularity and one from the $A_1$. 

So now we can write down a generating set for $S(\tau)$. Let us first label the curves in the configuration in Figure \ref{fig:rank14exceptional}, and then the lines $\ell_1(j,k)$ according to Table~\ref{tab1}. 
Lines are described by which of the exceptional curves they intersect. We do not specify which $j,k$ belong to each one. 
\begin{table}[h!]
    \centering
\begin{tabular}{|c|c||c|c|}
    \hline
	Curve number & description & Curve number & description \\
	\hline
	1 & $C_y$ & 19 & line connecting 12, 17 \\
	2-3-4 & $A_3$ & 20 & line connecting 12, 16\\
	5-6-7 & $A_3$ & 21 & line connecting 9, 16\\
	8-9-10 & $A_3$ & 22 & line connecting 9, 17 \\
	11-12-13 & $A_3$ & 23 &line connecting 6, 16\\
	14 & $C_x$ & 24 & line connecting 6, 17\\
	15 & $C_z$ & 25 & line connecting 3, 16\\
	16 & $A_1$ & 26 & line connecting 3, 17\\
	17 & $A_1$ &&	\\
	18 & $C_w$ &&	\\
	\hline
\end{tabular} 
\caption{Labeling curves in $S(\tau)$}
    \label{tab1}
\end{table}

Among these 26 curves, one possibility for the 18 generators of $S(\tau)$ are the curves whose label is  $3,4,5,6,7,8,9, 
10,11,12$, $13,14,15,16,17,19,21,23$, which translates to $C_x$, $C_z$, 11 of the twelve $A_3$ exceptional curves, both $A_1$'s, and 3 of the lines $\ell_1(j,k)$ (this is computed by MAGMA \cite{magma}, see also \cite[Appendix A]{CP}). 

In order to compute the lattice $S(\sigma_4)$ we show a set of curves giving a lattice of rank 14 and show a primitive embedding of it into $S(\tau)$.
Consider a new labeling of the curves as in Table~\ref{tab2}.

\begin{table}[h!]
    \centering
\begin{tabular}{|c|c|}
    \hline
	Curve number & description  \\
	\hline
	1 & $C_y$  \\
	2-3-4 & $A_3$ \\
	5-6-7 & $A_3$  \\
	8-9-10 & $A_3$ \\
	11-12-13 & $A_3$ \\
	14 & $C_x$ \\
	15 & $C_z$ \\
	16 & sum of the $A_1$'s \\
	17 & $C_w$	\\
	\hline
\end{tabular} 
\caption{Labeling curves in $S(\sigma_4)$}\label{tab2}
\end{table}

These curves are the elemenst of $\cB$ and therefore they generate the lattice $L_\cB$ with rank 14 contained in $S(\sigma_4)$ (which also has rank 14). One can check that a minimum set of generators is given by curves 3--16. Since those are also sums of the generators of $S(\tau)$, the embedding is primitive. 
Thus we can conclude that $S(\sigma_4)$ is the lattice generated by these 14 curves and it has quadratic form $v\oplus v_2$.

\subsection{Rank 10}\label{ss:rank10}

We now consider the same polynomial $W=x^4+y^4+z^4+w^4$ and the group $G/J\cong \ZZ_2\times \ZZ_2$. There is only one such group, and it is generated by $\left(\left(\tfrac{1}{2}, \tfrac{1}{2},0,0\right),\left(\tfrac{1}{2}, 0,\tfrac{1}{2},0\right)\right)$. 
The automorphism $\sigma_4$ is $\sigma_4:(x:y:z:w)\mapsto (ix:y:z:w)$.
After blowing up, we get the configuration of curves of Figure \ref{fig:rank10exc}.

\begin{figure}[h!]
\begin{tikzpicture}[scale=0.7]
	
\draw (0,0) -- (6,0);
\node [left] at (0,0){$C_y$};
	
\draw (0,3) -- (6,3);
\node [left] at (0,3){$C_x$};

\draw (1,1) -- (5,1);
\node [below] at (5,1){$C_w$};

\draw (2,2) -- (6,2);
\node [right] at (6,2){$C_z$};
	
\draw (0.7,-0.2) -- (0.7,3.2);
\draw (0.3,-0.2) -- (0.3,3.2);

\draw (1.7,0.8) -- (1.7,3.2);
\draw (1.3,0.8) -- (1.3,3.2);

\draw (5.7,-0.2) -- (5.7,2.2);
\draw (5.3,-0.2) -- (5.3,2.2);
	
\draw (2.7,-0.2) -- (2.7,1.2);
\draw (2.3,-0.2) -- (2.3,1.2);

\draw (3.7,0.8) -- (3.7,2.2);
\draw (3.3,0.8) -- (3.3,2.2);

\draw (4.7,1.8) -- (4.7,3.2);
\draw (4.3,1.8) -- (4.3,3.2);
\end{tikzpicture}
	\caption{Configuration of curves in the blow-up}\label{fig:rank10exc}
	\end{figure}
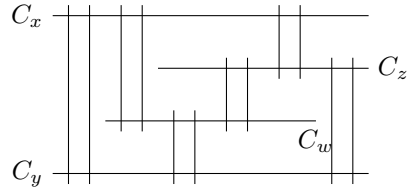

We consider again the lines $\ell_1(j,k), 0\leq j,k\leq 3$. We know that these curves intersect each other transversally and will meet the exceptional curves, because of the transversal intersection with the coordinate curves on the Fermat quartic as shown in the previous seciton. 
There are eight orbits of curves under the action of $G/J$ since:
\begin{align*}
\left(0,\tfrac{1}{2},0,\tfrac{1}{2}\right)\cdot [s,\omega i^js, t, \omega i^k t ] &= 
[s,\omega i^{j+2}s, t, \omega i^{k+2} t ]
\end{align*}
\begin{align*}
\left(0,\tfrac{1}{2},\tfrac{1}{2},0\right)\cdot [s,\omega i^js, t, \omega i^k t ] &= [s,-\omega i^js, -t, \omega i^k t ]\\
&=[s,\omega i^{j+2}s, t', \omega i^{k+2} t' ].
\end{align*}

Again, let $\tau=(\sigma_4)^2$. The action of $\tau$ on these lines is the following:
\begin{align*}
\left(\tfrac{1}{2},0,0,0\right)\cdot [s,\omega i^js, t, \omega i^k t ] &= 
[-s,\omega i^{j}s, t, \omega i^{k} t ]\\
&=[s',\omega i^{j+2}s', t, \omega i^k t ].
\end{align*}
This shows us that $\tau$ permutes curves $\ell_1(j,k)$ and $\ell_1(j+2,k)$, so that the sum $\ell_1(j,k)+\ell_1(j+2,k)$ is invariant for $\tau$. 

As before, we label curves from Figure \ref{fig:rank10exc} and lines; we detail it in Table~\ref{tab3}. The curves in the left side are exceptional curves, and the curves on the right side are the lines $\ell_1(j,k)$. 
\begin{table}[h!]
\begin{tabular}{|c|c||c|c|}
	\hline
	Curve number & description & Curve number & description \\
	\hline
	1 & $C_x$ & 17& $\ell(0,0)+\ell(0,2)$\\
	2 & $C_y$ & & intersects curve 5 and 15\\
	3 & $C_z$ &18& $\ell(0,1)+\ell(0,3)$\\
	4 & $C_w$ && intersects curve 5 and 16\\
	5-6 & curves intersecting $C_x$ and $C_y$ &19&$\ell(1,1)+\ell(1,3)$\\
7-8 & curves intersecting $C_x$ and $C_z$ &&intersecting curve 6 and 16\\
9-10 & curves intersecting $C_x$ and $C_w$ &20&$\ell(1,0)+\ell(3,0)$\\
11-12 & curves intersecting $C_y$ and $C_z$ &&intersecting curves 6 and 15\\
13-14 & curves intersecting $C_y$ and $C_w$ &&\\
15-16 & curves intersecting $C_z$ and $C_w$ &&\\\hline
\end{tabular} \caption{Labeling curves in $S(\tau)$}\label{tab3}
\end{table}

From Table \ref{tab3} and Figure~\ref{fig:rank10exc}, one can construct the matrix of incidence for the bilinear form generated by these curves. Notice the intersections in the right column will all have value 2 instead of 1, because the divisors listed there are sums of two genus zero curves. 

A computation with MAGMA \cite{magma} shows that these curves do indeed generate $S(\tau)$ and that the curves indexed by the following set are a minimal set of generators:
$1,2,3,6,8,9,10,11,12,13,14,15,16,17$. 
That corresponds to $C_x$, $C_y$, $C_z$, one of the curves between $C_x$ and $C_y$, one between $C_x$ and $C_z$, the lines $\ell_1(0,0)+\ell_1(2,0)$, and the rest of the exceptional curves.  

We know how $\sigma_4$ acts on the configuration of curves in Figure \ref{fig:rank10exc}. 
We consider the same curves 1-10 as before and 
curves 11--13 now represent the sums of the exceptional curves intersecting $C_y$ and $C_z$, $C_y$ and $C_w$ or $C_w$ and $C_z$, respectively, as in Table~\ref{tab4}. 
\begin{table}[h!]
\begin{tabular}{|c|c|}
\hline
	Curve number & description \\
	\hline
	1 & $C_x$\\
	2 & $C_y$ \\
	3 & $C_z$ \\
	4 & $C_w$ \\
	5-6 & curves intersecting $C_x$ and $C_y$ \\
7-8 & curves intersecting $C_x$ and $C_z$ \\
9-10 & curves intersecting $C_x$ and $C_w$ \\
11 & sum of curves intersecting $C_y$ and $C_z$ \\
12 & sum of curves intersecting $C_y$ and $C_w$ \\
13 & sum of curves intersecting $C_z$ and $C_w$ \\
\hline
\end{tabular}\caption{Labeling curves in $S(\sigma_4)$}\label{tab4}
\end{table}

We construct the incidence matrix of curves 1--13 and, by computations in MAGMA \cite{magma}, we can see that the lattice generated by the orbits of these curves has rank 10 and curves labeled by $1,2,3,6,8,9,10,11,12,13$ form a minimal set of generators for $S(\sigma_4)$. This is therefore a primitive embedding into $S(\tau)$ and so these in fact generate $S(\sigma_4)$.

Thus the invariant lattice $S(\sigma_4)=\langle 4\rangle\oplus(A_3)^3$ and its quadratic form is $w_{2,2}^1\oplus (w_{2,2}^5)^3$.

\subsection{Rank 8}\label{ss:rank8}

The last one is a little subtle, because we work in weighted projective space instead of projective space. 
For this rank, we use the pair $(W,G)$ with $W=x^2+y^4+z^8+w^8$ in $\PP(4,2,1,1)$ and the group $\tilde G\cong \ZZ_2$ generated by $(0,\tfrac{1}{2},\tfrac{1}{2},0)$. There are two other choices of group with $\tilde G\cong \ZZ_2$ in lines 11 and 42 of Table~\ref{tab:order4}, but these have invariant lattices with different rank. After blowing up we get the configuration of curves of Figure \ref{fig:rank8exc}. The curve $C_z$ and $C_w$ have genus zero, and $C_y$ has genus 1. The curve $C_x$ is not depicted, but has genus 3.

\begin{figure}
	\begin{tikzpicture}[scale=0.7]
	
	\draw (0,0) -- (5,0);
	\node [left] at (0,0){$C_w$};
	
	\draw (0,3) -- (5,3);
	\node [left] at (0,3){$C_z$};
	
	
	\draw (0,-0.2) -- (1,1.2);
	\draw (1,0.8) -- (0,2.2);
	\draw (0, 1.8) -- (1,3.2);
	
	\draw (1,-0.2) -- (2,1.2);
	\draw (2,0.8) -- (1,2.2);
	\draw (1, 1.8) -- (2,3.2);

	\draw (2.5,1.5) -- (5,1.5);
	\node [right] at (5,1.5){$C_y$};
	
	\draw (3,3.2) -- (3,1.3);
	\draw (3.5,3.2) -- (3.5,1.3);
	
	\draw (4,-0.2) -- (4,1.7);
	\draw (4.5,-0.2) -- (4.5,1.7);

	\end{tikzpicture}
	\caption{Configuration of curves in the blow-up}\label{fig:rank8exc}
	\end{figure}
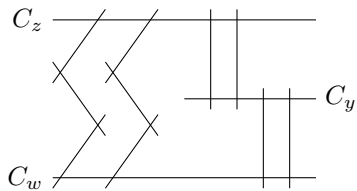

The automorphism $\sigma_4:(x:y:z:w)\mapsto(x:iy:z:w)$ fixes $C_y$ pointwisely, and therefore leaves all of the $A_1$'s invariant. It permutes the two $A_3$'s. 
It is not difficult to check that $\tau=(\sigma_4)^2$ fixes all of the depicted coordinate curves $C_y,C_z,C_w$, as well as the two central curves of the $A_3$'s. 
From the formula in Lemma~\ref{l:rank}, we learn that the invariant lattice $S(\tau)$ has rank 14, it is a 2-elementary lattice and its discriminant group has 6 generators. 

As before, the curves depicted in Figure~\ref{fig:rank8exc} do not generate a lattice of rank 14, so we need to add a few curves to generate the entire invariant lattice $S(\tau)$. 
Similar to what we have previously done, we describe some lines on the surface as follows. The subtlety here is that we can't embed $\PP^1$, but we can embed $\PP(2,1)$ into this weighted projective space. Let $\omega=e^{2\pi i/16}$ and define the lines 
\[
\ell(j,k) :=\set{[s^2,\omega^{2+4j} s, t, \omega^{1+2k} t ]: [s,t]\in \PP(2,1)}\\
\]
for $0\leq j\leq 1$ and $0\leq k\leq 7$. There are sixteen of these lines. Notice that this is a well-defined map into $\PP(4,2,1,1)$. When we blow up the surface, the singularity on each line at $z=w=0$ is resolved. 

Now we look to the group $G/J_W$ to see what these curves look like on the quotient. The group $G/J_W$ is generated by $\left(0,\tfrac{1}{2},0,\tfrac{1}{2}\right)$ and we see that 
\begin{align*}
\left(0,\tfrac{1}{2},0,\tfrac{1}{2}\right)\cdot [s^2,\omega^{2+4j}s, t, \omega^{1+2k} t ] &= 
[s^2,-\omega^{2+4j}s, t, -\omega^{1+2k} t ]\\
&=[s^2,\omega^{2+4(j+2)}s, t, \omega^{1+2(k+4)} t ]\\
&=[s^2,\omega^{2+4j}s, t, \omega^{1+2(k+4)} t ]
\end{align*}
The last equality holds because of the action on $\PP(2,1)$. Thus the action of $G/J_W$ has $8$ orbits among the set of lines $\ell(j,k)$ and we can just consider $\ell(j,k)$
for $0\leq j\leq 1$ and $0\leq k\leq 3$. 
Next we look at how these curves intersect each other and the curves in the configuration depicted in Figure \ref{fig:rank8exc}. 
As before, we can check that these lines intersect each other, and the coordinate curves transversally. 

Without too much trouble, one see that in the blowup (i.e. in the configuration of curves of Figure \ref{fig:rank8exc}) in fact the lines $\ell(j,k)$ intersect the middle curves on the $A_3$ singularities and the curve $C_y$ in the four points of intersection with $C_x$.  

Moreover, one can observe that $\ell(0,k)$ and $\ell(1,k)$ meet two different $A_3$'s and intersect with multiplicity 2.
So now we write down a generating set for $S(\tau)$. Let us first label the curves in Figure \ref{fig:rank8exc} and  the lines $\ell(j,k)$ according to Table \ref{tab5}. 

\begin{table}[h!]
    \centering
\begin{tabular}{|c|c|}
    \hline
	Curve number & description \\
	\hline
	1-2 & $A_1$'s intersecting $C_z$ and $C_y$  \\
	3 & $C_z$ \\
	4-5-6 & $A_3$ \\
	7 & $C_w$ \\
	8-9 & $A_1$'s intersecting $C_w$ and $C_y$ \\
	10-11-12 & $A_3$ \\
	13--16 & $\ell(0,k)$'s intersecting curve 11 \\
	17--20 & $\ell(1,k)$'s intersecting curve 5 \\
	21 & $C_y$ 	\\
	\hline
\end{tabular} \caption{Labeling curves in $S(\tau)$}\label{tab5}
\end{table}

Using MAGMA \cite{magma}, we can find that the lattice generated by these 21 curves has rank 14 and a minimal set of generators for $S(\tau)$ is formed by curves 2--15, which translates to $C_z$, $C_w$, 3 of the $A_1$'s, both $A_3$'s, and 2 of the lines $\ell_1(j,k)$, both intersecting curve number 5. 

Now we look for a primitive embedding of $S(\sigma_4)$ into $S(\tau)$. 
Consider curves as in Table~\ref{tab6}. Curves 4--6 each represents a sum of two curves permuted by $\sigma_4$ (recall that the two $A_3$'s are permuted by $\sigma_4$). 
\begin{table}[h!]
    \centering
\begin{tabular}{|c|c|}
\hline
	Curve number & description \\
	\hline
	1-2 & $A_1$'s intersecting $C_z$ and $C_y$  \\
	3 & $C_z$ \\
	4-5-6 & sum of the $A_3$'s \\
	7 & $C_w$ \\
	8-9 & $A_1$'s intersecting $C_w$ and $C_y$ \\
	\hline
\end{tabular} \caption{Labeling curves in $S(\sigma_4)$}\label{tab6}
\end{table}

These curves generate a lattice of rank 8 contained in $S(\sigma_4)$ (which also has rank 8). We can see from the calculation that in fact, a minimum set of generators is curves 1--8. Since those are also generators of $S(\tau)$, the embedding is primitive and we conclude that $S(\sigma_4)$ has quadratic form $v\oplus w_{2,2}^2\oplus w_{2,2}^5$.

\appendix
\section{Polynomials and forms}
\label{app:tables}
In tables below we list all possibilities for polynomials $W$ of the form $W=x_0^n+g(x_1,x_2,x_3)$, for $n=4,8,12$, and the group $G$, as explained in Section \ref{s:proofMain}. We have numbered each of the K3 surfaces, and have organized the numbering by rank. We have used double lines to separate the isomorphism/deformation classes in the table. Details for determining the equivalence classes are in Sections \ref{sec:order4}, \ref{sec:order8}, \ref{sec:order12}.  

We have indicated which representative in each class we have used to show that the embedding $L_\cB\hookrightarrow \Pic(X_{W,G})$ is primitive with a $*$. 
We have labelled the exceptional cases with a superscript $E$. 

In the last column of Tables~\ref{tab:order4} and \ref{tab:order8} we list one of three things. 
\begin{enumerate}
    \item If the lattice $L_\cB$ does not have any overlattices, as described previously, we write NOL. 
    \item If a line in the table is starred, the lattice computed by Belcastro in \cite{belcastro} has been used to show that $L_\cB$ primitively embeds into $\Pic(X)$ as described in Section~\ref{ss:belcastro}. In this case, we indicate the number of the weight system assigned in \cite{belcastro} for cross-referencing (see also \cite{yonemura}). 
    \item Finally, for three of the exceptional equivalence classes (indicated by a superscript $E$ on one of the representatives) we have provided further details regarding the primitive embedding of $L_\cB$ into $\Pic(X)$ in Sections~\ref{ss:rank14}, \ref{ss:rank10} and \ref{ss:rank8}. In the last column of Table~\ref{tab:order4}, we list the section where the reader can find the details. The details on the other exceptional cases are in Section~\ref{sec:order4}, \ref{sec:order8}, and \ref{sec:order12}, where we have described the grouping into equivalence classes. 
\end{enumerate}

Table~\ref{tab:order12} does not contain this last column, since for all but one of the equivalence classes the lattice $L_\cB$ has no overlattices (see Section~\ref{sec:order12}).

Observe that there are some cases where the group $\SL_W/J_W$ is not cyclic, namely 
 $x^4+y^4+z^4+w^4$ with weights $(1,1,1,1;4)$, $x^2+y^4+z^8+w^8$ with weights $(4,2,1,1;8)$ and $x^2+y^4+z^6+w^{12}$ with weights $(6,3,2,1;12)$.
In order to avoid confusion in recognizing the groups $G/J_W$ and doing the calculations, we 
detail the generators for each group $G/J_W$ for these polynomials.  
The last line on each table has $G=\SL_W$. 
\begin{itemize}
    \item $x^4+y^4+z^4+w^4$ with weights $(1,1,1,1;4)$:
\begin{center}
\begin{tabular}{ c|| c| c}
No. (order 4) & $G/J_W$ & Generators\\
\hline
3   & trivial& -\\
20  & $\ZZ/4\ZZ$ & $(\frac14,\frac34,0,0)$\\
23&$\ZZ/2\ZZ$&$(\frac12,\frac12,0,0)$\\
37&$\ZZ/2\ZZ\times\ZZ/2\ZZ$&$((\frac12,0,0,\frac12),(0,\frac12,0,\frac12))$\\
61&$\ZZ/2\ZZ\times\ZZ/4\ZZ$&$((\frac12,0,0\frac12),(\frac14,\frac34,0,0))$\\
63&$\ZZ/4\ZZ$&$(\frac14,0,\frac34,0)$\\
87&$\ZZ/4\ZZ\times\ZZ/4\ZZ$&$((\frac14,\frac34,0,0),(\frac14,0,\frac34,0))$\\
\end{tabular}
\end{center}
    
\item $x^2+y^4+z^8+w^8$ with weights $(4,2,1,1;4)$:
\begin{center}
\begin{tabular}{ c| c|| c| c}
No. (order 4) & No. (order 8) & $G/J_W$ & Generators\\
\hline
7 & 6 & trivial& -\\
11 & 15 & $\ZZ/2\ZZ$ &$(\frac12,0,\frac12,0)$\\
31 & 21 &$\ZZ/2\ZZ$&$(0,\frac12,\frac12,0)$\\
39 & 37 &$\ZZ/4\ZZ$&$(0,0,\frac18,\frac78)$\\
42 & 5 &$\ZZ/2\ZZ$&$(\frac12,\frac12,0,0)$\\
58 & 19 &$\ZZ/2\ZZ\times\ZZ/2\ZZ$&$((\frac12,\frac12,0,0),(0,0,\frac14,\frac34))$\\
77 & 24 &$\ZZ/4\ZZ$&$(0,\frac14,\frac34,0)$\\
83 & 33 &$\ZZ/2\ZZ\times\ZZ/4\ZZ$& $((\frac12,\frac12,0,0),(0,0,\frac18,\frac78))$\\
\end{tabular}
\end{center}

\item $x^2+y^4+z^6+w^{12}$ with weights $(6,3,2,1;12)$:
\begin{center}
\begin{tabular}{ c| c|| c| c}
No. (order 4) & No. (order 12) & $G/J_W$ & Generators\\
\hline
15 & 6 & trivial& -\\
33 & 25 & $\ZZ/2\ZZ$ &$(0,\frac12,\frac12,0)$\\
51 & 17 &$\ZZ/2\ZZ$&$(\frac12,0,\frac12,0)$\\
54 & 7 &$\ZZ/2\ZZ$&$(\frac12,\frac12,0,0)$\\
73 & 24 &$\ZZ/2\ZZ\times\ZZ/2\ZZ$&
$((\frac12,0,\frac12,0),(\frac12,\frac12,0,0))$\\
\end{tabular}
\end{center}

\end{itemize}

The first column in the previous tables refers to the numbering of Table~\ref{tab:order4}, and the second column of the last two tables refers to the numbering of Table~\ref{tab:order8} and Table~\ref{tab:order12}, respectively. 

\footnotesize

\begin{longtable}{ l c c c c c c c}

\caption{Table for $n=4$}
    \label{tab:order4}\\

  No. & $r_X$ & BHK Dual & Weights & Polynomial & $G/J_W$ & Form& Note\\ \hline
  \midrule
 \endfirsthead
 
  No. & $r_X$ & BHK Dual & Weights & Polynomial & $G/J_W$ & Form& Note
  \\\hline    \midrule
 \endhead
  1 & 1 & 85 & (1,1,1,1;4) & $x^4+y^3z+z^3w+yw^3$ & trivial & $w_{2,2}^1$& NOL
  \\
  \hline
  2 & 1 & 86 & (1,1,1,1;4) & $x^4+y^4+z^3w+zw^3$ & trivial & $w_{2,2}^1$ &
  \\
  \hline
  3 & 1 & 87 & (1,1,1,1;4) & $x^4+y^4+z^4+w^4$ & trivial & $w_{2,2}^1$ &
  \\
  \hline
  4 & 1 & 89 & (1,1,1,1;4) & $x^3w+y^4+z^4+w^4$ & trivial & $w_{2,2}^1$&
  \\
  \hline
  5 & 1 & 88 & (1,1,1,1;4) & $x^3z+y^4+z^3w+w^4$ & trivial & $w_{2,2}^1$&
  \\
  \hline \hline
  6* & 2 & 80 & (4,2,1,1;8) & $x^2+y^4+z^7w+zw^7$ & trivial & $u$ &7\\
  \hline
  7 & 2 & 83 & (4,2,1,1;8) & $x^2+y^4+z^8+w^8$ & trivial & $u$ &
  \\
  \hline
  8 & 2 & 81 & (4,2,1,1;8) & $x^2+y^4+xz^4+w^8$ & trivial & $u$&
  \\
  \hline 
  9 & 2 & 84 & (4,2,1,1;8) & $x^2+y^4+z^7w+w^8$ & trivial & $u$&
  \\
  \hline
  10 & 2 & 82 & (4,2,1,1;8) & $x^2+y^4+xz^4+zw^7$ & trivial & $u$&
  \\
  \hline \hline
  11 & 4 & 77 & (4,2,1,1;8) & $x^2+y^4+z^8+w^8$ & $\ZZ/2\ZZ$ & $w_{2,2}^{1}+w_{2,2}^5$ &
  \\
  \hline
  12* & 4 & 78 & (3,2,2,1;8) & $x^2z+y^4+z^4+w^8$ & trivial & $w_{2,2}^{1}+w_{2,2}^5$&19 \\
  \hline
  13 & 4 & 79 & (3,2,2,1;8) & $x^2z+y^4+z^4+xw^5$ & trivial & $w_{2,2}^{1}+w_{2,2}^5$ &
  \\
  \hline \hline
  14 & 5 & 72 & (6,3,2,1;12) & $x^2+y^4+xz^3+w^{12}$ & trivial & $u+w_{2,2}^5$ &
  \\
  \hline
  15 & 5 & 73 & (6,3,2,1;12) & $x^2+y^4+z^6+w^{12}$ & trivial & $u+w_{2,2}^5$ &
  \\
  \hline
  16* & 5 & 74 & (6,3,2,1;12) & $x^2+y^4+z^6+xw^6$ & trivial & $u+w_{2,2}^5$ &8\\
  \hline
  17 & 5 & 75 & (6,3,2,1;12) & $x^2+y^4+z^6+zw^{10}$ & trivial & $u+w_{2,2}^5$ &
  \\
  \hline
  18 & 5 & 76 & (6,3,2,1;12) & $x^2+y^4+xz^3+zw^{10}$ & trivial & $u+w_{2,2}^5$ &
  \\
  \hline \hline
  19 & 6 & 60 & (1,1,1,1;4) & $x^4+y^4+z^3w+zw^3$ & $\ZZ/2\ZZ$ & $v+v_2$&
  \\
  \hline
  20 & 6 & 63 & (1,1,1,1;4) & $x^4+y^4+z^4+w^4$ & $\ZZ/4\ZZ$ & $v+v_2$ &
  \\
  \hline
  21* & 6 & 64 & (4,3,3,2;12) & $x^3+y^4+z^4+xw^4$ & trivial & $v+v_2$&2 \\
  \hline
  22 & 6 & 65 & (4,3,3,2;12) & $x^3+y^4+z^4+w^6$ & trivial & $v+v_2$ &
  \\
  \hline
  23 & 6 & 61 & (1,1,1,1;4) & $x^4+y^4+z^4+w^4$ & $\ZZ/2\ZZ$ & $v+v_2$ &
  \\
  \hline
  24 & 6 & 62 & (1,1,1,1;4) & $x^3w+y^4+z^4+w^4$ & $\ZZ/2\ZZ$ & $v+v_2$&
  \\
  \hline \hline
  25* & 6 & 66 & (10,5,4,1;20) & $x^2+y^4+z^5+w^{20}$ & trivial & $u+v$ & 9
  \\
  \hline
  26 & 6 & 69 & (8,4,3,1;16) & $x^2+y^4+z^5w+w^{16}$ & trivial & $u+v$ &
  \\
  \hline
  27 & 6 & 67 & (10,5,4,1;20) & $x^2+y^4+z^5+xw^{10}$ & trivial & $u+v$ &
  \\
  \hline
  28 & 6 & 71 & (8,4,3,1;16) & $x^2+y^4+z^5w+xw^8$ & trivial & $u+v$ &
  \\
  \hline
  29 & 6 & 68 & (10,5,4,1;20) & $x^2+y^4+z^5+zw^{16}$ & trivial & $u+v$ &
  \\
  \hline
  30 & 6 & 70 & (8,4,3,1;16) & $x^2+y^4+z^5w+zw^{13}$ & trivial & $u+v$&
  \\
  \hline \hline
  $31^E$ & 8 & 58 & (4,2,1,1;8) & $x^2+y^4+z^8+w^8$ & $\ZZ/2\ZZ$ & $v+w_{2,2}^1+w_{2,2}^5$ & S.\ref{ss:rank8}
  \\
  \hline
  32 & 8 & 59 & (4,2,1,1;8) & $x^2+y^4+xz^4+w^8$ & $\ZZ/2\ZZ$ & $v+w_{2,2}^1+w_{2,2}^5$&
  \\
  \hline \hline
  33 & 9 & 54 & (6,3,2,1;12) & $x^2+y^4+z^6+w^{12}$ & $\ZZ/2\ZZ$ & $u+v+w_{2,2}^5$ &
  \\
  \hline
  34* & 9 & 57 & (10,5,3,2;20) & $x^2+y^4+z^6w+w^{10}$ & trivial & $u+v+w_{2,2}^5$ &36\\
  \hline
  35 & 9 & 55 & (6,3,2,1;12) & $x^2+y^4+z^6+xw^6$ & $\ZZ/2\ZZ$ & $u+v+w_{2,2}^5$ &
  \\
  \hline
  36 & 9 & 56 & (10,5,3,2;20) & $x^2+y^4+z^6w+xw^5$ & trivial & $u+v+w_{2,2}^5$&
  \\
  \hline \hline

  $37^E$ & 10 & self & (1,1,1,1;4) & $x^4+y^4+z^4+w^4$ & $\ZZ/2\ZZ \times \ZZ/2\ZZ$ & $w_{2,2}^1\oplus (w_{2,2}^5)^3$ & S.\ref{ss:rank10}
  \\
  \hline \hline
  38 & 10 & 41 & (4,2,1,1;8) & $x^2+y^4+z^7w+zw^7$ & $\ZZ/3\ZZ$ & $u^3$&
  \\
  \hline
  39 & 10 & 42 & (4,2,1,1;8) & $x^2+y^4+z^8+w^8$ & $\ZZ/4\ZZ$ & $u^3$ &
  \\
  \hline
  40* & 10 & 43 & (14,7,4,3;28) & $x^2+y^4+z^7+zw^8$ & trivial & $u^3$ &35\\
  \hline \hline
  41 & 10 & 38 & (4,2,1,1;8) & $x^2+y^4+z^7w+zw^7$ & $\ZZ/2\ZZ$ & $u^3$ &
  \\
  \hline
  $42^E$ & 10 & 39 & (4,2,1,1;8) & $x^2+y^4+z^8+w^8$ & $\ZZ/2\ZZ$ & $u^3$ &S:\ref{sec:order4}
  \\
  \hline
  43 & 10 & 40 & (4,2,1,1;8) & $x^2+y^4+z^7w+w^8$ & $\ZZ/2\ZZ$ & $u^3$&
  \\
  \hline \hline
  44* & 10 & 52 & (8,7,6,3;24) & $x^3+y^3w+z^4+w^8$ & trivial & $v_2$ &16\\
  \hline
  45 & 10 & 49 & (4,4,3,1;12) & $x^3+y^3+z^4+w^{12}$ & trivial & $v_2$ &
  \\
  \hline
  46 & 10 & self & (4,4,3,1;12) & $x^2y+xy^2+z^4+w^{12}$ & trivial & $v_2$&
  \\
  \hline 
  47 & 10 & 50 & (6,3,2,1;12) & $x^2+y^4+xz^3+w^{12}$ & $\ZZ/2\ZZ$ & $v_2$ &
  \\
  \hline
  48 & 10 & 53 & (7,4,3,2;16) & $x^2w+y^4+xz^3+w^8$ & trivial & $v_2$ &
  \\
  \hline
  49 & 10 & 45 & (4,4,3,1;12) & $x^3+y^3+z^4+w^{12}$ & $\ZZ/3\ZZ$ & $v_2$ &
  \\
  \hline 
  50 & 10 & 47 & (4,4,3,1;12) & $x^2y+y^3+z^4+w^{12}$ & trivial & $v_2$ &
  \\
  \hline
  51 & 10 & self & (6,3,2,1;12) & $x^2+y^4+z^6+w^{12}$ & $\ZZ/2\ZZ$ & $v_2$ &
  \\
  \hline
  52 & 10 & 44 & (4,4,3,1;12) & $x^3+y^3+z^4+yw^8$ & trivial & $v_2$ &
  \\
  \hline 
  53 & 10 & 48 & (4,4,3,1;12) & $x^3+xy^2+z^4+yw^8$ & trivial & $v_2$ &
  \\
  \hline \hline
  54 & 11 & 33 & (6,3,2,1;12) & $x^2+y^4+z^6+w^{12}$ & $\ZZ/2\ZZ$ & $u+v+w_{2,2}^{-5}$ &
  \\
  \hline
  55* & 11 & 35 & (5,3,2,2;12) & $x^2w+y^4+z^6+w^6$ & trivial & $u+v+w_{2,2}^{-5}$ &23\\
  \hline
  56 & 11 & 36 & (5,3,2,2;12) & $x^2w+y^4+z^6+zw^5$ & trivial & $u+v+w_{2,2}^{-5}$&
  \\
  \hline
  57 & 11 & 34 & (6,3,2,1;12) & $x^2+y^4+z^6+zw^{10}$ & $\ZZ/2\ZZ$ & $u+v+w_{2,2}^{-5}$ &
  \\
  \hline \hline
  $58^E$ & 12 & 31 & (4,2,1,1;8) & $x^2+y^4+z^8+w^8$ & $\ZZ/2\ZZ \times \ZZ/2\ZZ$ & $v+w_{2,2}^{-1}+w_{2,2}^{-5}$ &S:\ref{sec:order4}
  \\
  \hline
  59 & 12 & 32 & (3,2,1,1;8) & $x^2z+y^4+z^4+w^8$ & $\ZZ/2\ZZ$ & $v+w_{2,2}^{-1}+w_{2,2}^{-5}$ &
  \\
  \hline \hline
  60 & 14 & 19 & (1,1,1,1;4) & $x^4+y^4+z^3w+zw^3$ & $\ZZ/4\ZZ$ & $v+v_2$ &
  \\
  \hline
 61 & 14 & 23 & (1,1,1,1;4) & $x^4+y^4+z^4+w^4$ & $\ZZ/2\ZZ \times \ZZ/4\ZZ$ & $v+v_2$ &
 \\
 \hline
 62 & 14 & 24 & (4,3,3,2;12) & $x^3+y^4+z^4+xw^4$ & $\ZZ/2\ZZ$ & $v+v_2$ &
 \\
 \hline
 $63^E$ & 14 & 20 & (1,1,1,1;4) & $x^4+y^4+z^4+w^4$ & $\ZZ/4\ZZ$ & $v+v_2$ & S.\ref{ss:rank14}
 \\
 \hline
 64 & 14 & 21 & (1,1,1,1;4) & $x^3w+y^4+z^4+w^4$ & $\ZZ/4\ZZ$ & $v+v_2$ &
 \\
 \hline
 65 & 14 & 22 & (4,3,3,2;12) & $x^3+y^4+z^4+w^6$ & $\ZZ/2\ZZ$ & $v+v_2$ &
 \\
 \hline \hline
 66 & 14 & 25 & (10,5,4,1;20) & $x^2+y^4+z^5+w^{20}$ & $\ZZ/2\ZZ$ & $u+v$ &
 \\
 \hline
 67* & 14 & 27 & (9,5,4,2;20) & $x^2w+y^4+z^5+w^{10}$ & trivial & $u+v$ &26\\
 \hline
 68 & 14 & 29 & (8,4,3,1;16) & $x^2+y^4+z^5w+w^{16}$ & $\ZZ/2\ZZ$ & $u+v$ &
 \\
 \hline 
 69 & 14 & 26 & (10,5,4,1;20) & $x^2+y^4+z^5+zw^{16}$ & $\ZZ/2\ZZ$ & $u+v$ &
 \\
 \hline
 70 & 14 & 30 & (8,4,3,1;16) & $x^2+y^4+z^5w+zw^{13}$ & $\ZZ/2\ZZ$ & $u+v$ &
 \\
 \hline
 71 & 14 & 28 & (9,5,4,2;20) & $x^2w+y^4+z^5+zw^8$ & trivial & $u+v$ &
 \\
 \hline \hline

 72 & 15 & 14 & (4,4,3,1;12) & $x^2y+y^3+z^4+w^{12}$ & $\ZZ/2\ZZ$ & $u+w_{2,2}^{-5}$ &
 \\
 \hline
 73 & 15 & 15 & (6,3,2,1;12) & $x^2+y^4+z^6+w^{12}$ & $\ZZ/2\ZZ \times \ZZ/2\ZZ$ & $u+w_{2,2}^{-5}$ &
 \\
 \hline
 74 & 15 & 16 & (5,3,2,2;12) & $x^2w+y^4+z^6+w^6$ & $\ZZ/2\ZZ$ & $u+w_{2,2}^{-5}$ &
 \\
 \hline
 75 & 15 & 17 & (10,5,3,2;20) & $x^2+y^4+z^6w+w^{10}$ & $\ZZ/2\ZZ$ & $u+w_{2,2}^{-5}$&
 \\
 \hline
 76* & 15 & 18 & (7,6,5,2;20) & $x^2y+y^3w+z^4+w^{10}$ & trivial & $u+w_{2,2}^{-5}$&55 \\
 \hline \hline
 77 & 16 & 11 & (4,2,1,1;8) & $x^2+y^4+z^8+w^8$ & $\ZZ/4\ZZ$ & $w_{2,2}^{-1}+w_{2,2}^{-5}$&
 \\
 \hline
 78 & 16 & 12 & (4,2,1,1;8) & $x^2+y^4+xz^4+w^8$ & $\ZZ/4\ZZ$ & $w_{2,2}^{-1}+w_{2,2}^{-5}$ &
 \\
 \hline
 79* & 16 & 13 & (8,5,4,3;20) & $x^2z+y^4+z^5+xw^4$ & trivial & $w_{2,2}^{-1}+w_{2,2}^{-5}$&62 \\
 \hline \hline
 80 & 18 & 6 & (4,2,1,1;8) & $x^2+y^4+z^7w+zw^7$ & $\ZZ/6\ZZ$ & $u$ &
 \\
 \hline
 81 & 18 & 8 & (3,2,2,1;8) & $x^2z+y^4+z^4+w^8$ & $\ZZ/4\ZZ$ & $u$ &
 \\
 \hline
 82* & 18 & 10 & (11,7,6,4;28) & $x^2z+y^4+z^4w+w^7$ & trivial & $u$ &61\\
 \hline
 83 & 18 & 7 & (4,2,1,1;8) & $x^2+y^4+z^8+w^8$ & $\ZZ/2\ZZ \times \ZZ/4\ZZ$ & $u$&
 \\
 \hline
 84 & 18 & 9 & (14,7,4,3;28) & $x^2+y^4+z^7+zw^8$ & $\ZZ/2\ZZ$ & $u$ &
 \\
 \hline \hline
 85 & 19 & 1 & (1,1,1,1;4) & $x^4+y^3z+z^3w+yw^3$ & $\ZZ/7\ZZ$ & $w_{2,2}^{-1}$ &
 \\
 \hline
 86 & 19 & 2 & (1,1,1,1;4) & $x^4+y^4+z^3w+zw^3$ & $\ZZ/8\ZZ$ & $w_{2,2}^{-1}$ &
 \\
 \hline
 87 & 19 & 3 & (1,1,1,1;4) & $x^4+y^4+z^4+w^4$ & $\ZZ/4\ZZ \times \ZZ/4\ZZ$ & $w_{2,2}^{-1}$ &
 \\
 \hline
 88* & 19 & 5 & (12,9,8,7;36) & $x^3+y^4+xz^3+zw^4$ & trivial & $w_{2,2}^{-1}$ &52\\
 \hline
 89 & 19 & 4 & (4,3,3,2;12) & $x^3+y^4+z^4+xw^4$ & $\ZZ/4\ZZ$ & $w_{2,2}^{-1}$ &
 \\
 \hline
 
 \end{longtable}

\begin{longtable}{ l c c c c c c c}
\caption{Table for $n=8$}
    \label{tab:order8}\\
  No. & $r_X$ & BHK Dual & Weights & Polynomial & $G/J_W$ & Form& Note\\ \hline
  \midrule
 \endfirsthead

  No & $r_X$ & BHK Dual & Weights & Polynomial & $G/J_W$ & Form& Note
  \\\hline    \midrule
 \endhead
 1* & 3 & 36 & (4,2,1,1;8) & $x^2+y^4+xz^4+w^8$ & trivial & $w_{2,2}^{-1}$ &7 \\
 \hline
 2 & 3 & 34 & (4,2,1,1;8) & $x^2+y^4+yz^6+w^8$ & trivial & $w_{2,2}^{-1}$&
 \\
 \hline
 3 & 3 & 35 & (4,2,1,1;8) & $x^2+xy^2+yz^6+w^8$ & trivial & $w_{2,2}^{-1}$ &
 \\
 \hline
 4 & 3 & 32 & (4,2,1,1;8) & $x^2+xy^2+z^8+w^8$ & trivial & $w_{2,2}^{-1}$ &
 \\
 \hline
 5 & 3 & 37 & (4,2,1,1;8) & $x^2+y^4+z^8+w^8$ & $\ZZ/2\ZZ$ & $w_{2,2}^{-1}$ &
 \\
 \hline
 6 & 3 & 33 & (4,2,1,1;8) & $x^2+y^4+z^8+w^8$ & trivial & $w_{2,2}^{-1}$ &
 \\
 \hline \hline
 7 & 6 & 29 & (12,8,3,1;24) & $x^2+y^3+z^8+xw^{12}$ & trivial & $v$ &
 \\
 \hline
 8* & 6 & 27 & (8,5,2,1;16) & $x^2+y^3w+z^8+xw^8$ & trivial & $v$ &44\\
 \hline
 9 & 6 & 30 & (12,8,3,1,24) & $x^2+y^3+z^8+yw^{16}$ & trivial & $v$ &
 \\
 \hline
 10 & 6 & 28 & (8,5,2,1;16) & $x^2+y^3w+z^8+yw^{11}$ & trivial & $v$ &
 \\
 \hline
 11 & 6 & 31 & (12,8,3,1;24) & $x^2+y^3+z^8+w^{24}$ & trivial & $v$ &
 \\
 \hline
 12 & 6 & 26 & (8,5,2,1;16) & $x^2+y^3w+z^8+w^{16}$ & trivial & $v$ &
 \\ 
 \hline \hline
 13 & 7 & 25 & (4,2,1,1;8) & $x^2+y^4+yz^6+w^8$ & $\ZZ/2\ZZ$ & $v+w_{2,3}^{-1}$ &
 \\
 \hline 
 14* & 7 & 23 & (3,2,2,1;8) & $x^2z+y^4+yz^3+w^8$ & trivial & $v+w_{2,3}^{-1}$ &19\\
 \hline 
 15 & 7 & 24 & (4,2,1,1;8) & $x^2+y^4+z^8+w^8$ & $\ZZ/2\ZZ$ & $v+w_{2,3}^{-1}$ &
 \\
 \hline
 16 & 7 & 22 & (3,2,2,1;8) & $x^2z+y^4+z^4+w^8$ & trivial & $v+w_{2,3}^{-1}$ &
 \\
 \hline \hline
 17 & 10 & 20 & (4,2,1,1;8) & $x^2+y^4+xz^4+w^8$ & $\ZZ/2\ZZ$ & $w_{2,2}^{-1}+w_{2,3}^{1}$ &
 \\
 \hline
 18 & 10 & 18 & (4,2,1,1;8) & $x^2+xy^2+z^8+w^8$ & $\ZZ/2\ZZ$ & $w_{2,2}^{-1}+w_{2,3}^{1}$&
 \\
 \hline
 $19^E$ & 10 & 21 & (4,2,1,1;8) & $x^2+y^4+z^8+w^8$ & $\ZZ/2\ZZ \times \ZZ/2\ZZ$ & $w_{2,2}^{-1}+w_{2,3}^{1}$ &S. \ref{sec:order8}
 \\
 \hline
 20 & 10 & 17 & (3,2,2,1;8) & $x^2z+y^4+z^4+w^8$ & $\ZZ/2\ZZ$ & $w_{2,2}^{-1}+w_{2,3}^{1}$ &
 \\
 \hline
 21 & 10 & 19 & (4,2,1,1;8) & $x^2+y^4+z^8+w^8$ & $\ZZ/2\ZZ$ & $w_{2,2}^{-1}+w_{2,3}^{1}$ &
 \\
 \hline \hline
 22 & 13 & 16 & (4,2,1,1;8) & $x^2+y^4+xz^4+w^8$ & $\ZZ/4\ZZ$ & $v+w_{2,3}^1$ &
 \\
 \hline
 23* & 13 & 14 & (12,5,4,3;24) & $x^2+y^4z+xz^3+w^8$ & trivial & $v+w_{2,3}^1$ &31\\
 \hline
 24 & 13 & 15 & (4,2,1,1;8) & $x^2+y^4+z^8+w^8$ & $\ZZ/4\ZZ$ & $v+w_{2,3}^1$ &
 \\
 \hline
 25 & 13 & 13 & (12,5,4,3;24) & $x^2+y^4z+z^6+w^8$ & trivial & $v+w_{2,3}^1$ &
 \\
 \hline \hline
 26 & 14 & 12 & (12,8,3,1;24) & $x^2+y^3+z^8+yw^{16}$ & $\ZZ/2\ZZ$ & $v$ &
 \\
 \hline
 27* & 14 & 8 & (11,8,3,2;24) & $x^2w+y^3+z^8+yw^8$ & trivial & $v$ &27\\
 \hline
 28 & 14 & 10 & (8,5,2,1;16) & $x^2+y^3w+z^8+yw^{11}$ & $\ZZ/2\ZZ$ & $v$ &
 \\
 \hline 
 29 & 14 & 7 & (11,8,3,2;24) & $x^2w+y^3+z^8+w^{12}$ & trivial & $v$ &
 \\
 \hline
 30 & 14 & 9 & (8,5,2,1;16) & $x^2+y^3w+z^8+w^{16}$ & $\ZZ/2\ZZ$ & $v$ &
 \\
 \hline
 31 & 14 & 11 & (12,8,3,1;24) & $x^2+y^3+z^8+w^{24}$ & $\ZZ/2\ZZ$ & $v$ &
 \\
 \hline \hline
 32 & 17 & 4 & (4,2,1,1;8) & $x^2+xy^2+z^8+w^8$ & $\ZZ/4\ZZ$ & $w_{2,2}^1$&
 \\
 \hline
 33 & 17 & 6 & (4,2,1,1;8) & $x^2+y^4+z^8+w^8$ & $\ZZ/2\ZZ \times \ZZ/4\ZZ$ & $w_{2,2}^1$ &
 \\
 \hline
 34 & 17 & 2 & (12,5,4,3;24) & $x^2+y^4z+z^6+w^8$ & $\ZZ/2\ZZ$ & $w_{2,2}^1$&
 \\
 \hline
 35* & 17 & 3 & (10,7,4,3;24) & $x^2z+xy^2+z^6+w^8$ & trivial & $w_{2,2}^1$ &64\\
 \hline
 36 & 17 & 1 & (3,2,2,1;8) & $x^2z+y^4+z^4+w^8$ & $\ZZ/4\ZZ$ & $w_{2,2}^1$ &
 \\
 \hline
 37 & 17 & 5 & (4,2,1,1;8) & $x^2+y^4+z^8+w^8$ & $\ZZ/4\ZZ$ & $w_{2,2}^1$ &
 \\
 \hline
 \end{longtable}

 \begin{longtable}{ c c c c c c c}
 \caption{Table for $n=12$} \label{tab:order12}\\
 
  No. & $r_X$ & BHK Dual & Weights & Polynomial & $G/J_W$ & Form
  \\ \hline
  \midrule
 \endfirsthead

  No. & $r_X$ & BHK Dual & Weights & Polynomial & $G/J_W$ & Form 
  \\\hline    \midrule
 \endhead
 1 & 2 & 26 & (6,4,1,1;12) & $x^2+y^3+xz^6+w^{12}$ & trivial & trivial 
 \\
 \hline
 2 & 2 & 27 & (6,4,1,1;12) & $x^2+y^3+yz^8+w^{12}$ & trivial & trivial 
 \\
 \hline
 3 & 2 & 28 & (6,4,1,1;12) & $x^2+y^3+z^{12}+w^{12}$ & trivial & trivial 
 \\
 \hline \hline
 4 & 7 & 22 & (6,3,2,1;12) & $x^2+y^4+xz^3+w^{12}$ & trivial & $w_{2,2}^1+w_{3,1}^{-1}$
 \\
 \hline
 5 & 7 & 23 & (6,3,2,1;12) & $x^2+xy^2+z^6+w^{12}$ & trivial & $w_{2,2}^1+w_{3,1}^{-1}$ 
 \\
 \hline
 6 & 7 & 24 & (6,3,2,1;12) & $x^2+y^4+z^6+w^{12}$ & trivial & $w_{2,2}^1+w_{3,1}^{-1}$ 
 \\
 \hline
 7 & 7 & 25 & (6,3,2,1;12) & $x^2+y^4+z^6+w^{12}$ & $\ZZ/2\ZZ$ & $w_{2,2}^1+w_{3,1}^{-1}$ 
 \\
 \hline \hline
 8 & 8 & 20 & (5,4,2,1;12) & $x^2z+y^3+z^6+w^{12}$ & trivial & $v+w_{3,1}^1$ 
 \\
 \hline
 9 & 8 & 19 & (5,4,2,1;12) & $x^2z+y^3+yz^4+w^{12}$ & trivial & $v+w_{3,1}^1$ 
 \\
 \hline
 10 & 8 & 18 & (6,4,1,1;12) & $x^2+y^3+yz^8+w^{12}$ & $\ZZ/2\ZZ$ & $v+w_{3,1}^1$ 
 \\
 \hline
 11 & 8 & 21 & (6,4,1,1;12) & $x^2+y^3+z^{12}+w^{12}$ & $\ZZ/2\ZZ$ & $v+w_{3,1}^1$ 
 \\
 \hline \hline
 12 & 10 & 16 & (4,4,3,1;12) & $x^2y+y^3+z^4+w^{12}$ & trivial & $v_2$ 
 \\
 \hline
 $13^E$ & 10 & 14 & (4,4,3,1;12) & $x^3+y^3+z^4+w^{12}$ & trivial & $v_2$ 
 \\
 \hline
 14 & 10 & 13 & (4,4,3,1;12) & $x^3+y^3+z^4+w^{12}$ & $\ZZ/3\ZZ$ & $v_2$ 
 \\
 \hline
 15 & 10 & self & (4,4,3,1;12) & $x^2y+xy^2+z^4+w^{12}$ & trivial & $v_2$ 
 \\
 \hline
 16 & 10 & 12 & (6,3,2,1;12) & $x^2+y^4+xz^3+w^{12}$ & $\ZZ/2\ZZ$ & $v_2$ 
 \\
 \hline
 17 & 10 & self & (6,3,2,1;12) & $x^2+y^4+z^6+w^{12}$ & $\ZZ/2\ZZ$ & $v_2$ 
 \\
 \hline \hline
 18 & 12 & 10 & (12,7,3,2;24) & $x^2+y^3z+z^8+w^{12}$ & trivial & $v+w_{3,1}^{-1}$ 
 \\
 \hline
 19 & 12 & 9 & (12,7,3,2;24) & $x^2+y^3z+xz^4+w^{12}$ & trivial & $v+w_{3,1}^{-1}$ 
 \\
 \hline
 20 & 12 & 8 & (6,4,1,1;12) & $x^2+y^3+xz^6+w^{12}$ & $\ZZ/3\ZZ$ & $v+w_{3,1}^{-1}$ 
 \\
 \hline
 21 & 12 & 11 & (6,4,1,1;12) & $x^2+y^3+z^{12}+w^{12}$ & $\ZZ/3\ZZ$ & $v+w_{3,1}^{-1}$ 
 \\
 \hline \hline
 22 & 13 & 4 & (4,4,3,1;12) & $x^2y+y^3+z^4+w^{12}$ & $\ZZ/2\ZZ$ & $w_{2,2}^{-1}+w_{3,1}^{1}$ 
 \\
 \hline
 23 & 13 & 5 & (6,3,2,1;12) & $x^2+xy^2+z^6+w^{12}$ & $\ZZ/2\ZZ$ & $w_{2,2}^{-1}+w_{3,1}^{1}$ 
 \\
 \hline
 24 & 13 & 6 & (6,3,2,1;12) & $x^2+y^4+z^6+w^{12}$ & $\ZZ/2\ZZ \times \ZZ/2\ZZ$ & $w_{2,2}^{-1}+w_{3,1}^{1}$ 
 \\
 \hline
 25 & 13 & 7 & (6,3,2,1;12) & $x^2+y^4+z^6+w^{12}$ & $\ZZ/2\ZZ$ & $w_{2,2}^{-1}+w_{3,1}^{1}$
 \\
 \hline \hline
 26 & 18 & 1 & (5,4,2,1;12) & $x^2z+y^3+z^6+w^{12}$ & $\ZZ/3\ZZ$ & trivial 
 \\
 \hline
 27 & 18 & 2 & (12,7,3,2;24) & $x^2+y^3z+z^8+w^{12}$ & $\ZZ/2\ZZ$ & trivial 
 \\
 \hline
 28 & 18 & 3 & (6,4,1,1;12) & $x^2+y^3+z^{12}+w^{12}$ & $\ZZ/6\ZZ$ & trivial 
 \\
 \hline
 \end{longtable}

\bibliographystyle{plain}
\bibliography{references}

\end{document}